\newcommand{\R}{\mathbb{R}}
\renewcommand{\L}{\mathcal{L}}
\newcommand{\e}{\varepsilon}
\renewcommand{\epsilon}{\varepsilon}
\newcommand{\eps}{\varepsilon}
\newtheorem{theorem}{Theorem}
\newtheorem{proposition}[theorem]{Proposition}
\newtheorem{lemma}[theorem]{Lemma}
\newtheorem{remark}[theorem]{Remark}
\newtheorem{example}[theorem]{Example}
\title{A Hamilton-Jacobi approach for front propagation in kinetic equations}
\author{Emeric Bouin\footnote{UMR CNRS 5669 'UMPA' and INRIA project 'NUMED', \'Ecole Normale Sup\'erieure de Lyon, 
46, all\'ee d'Italie, 
F-69364 Lyon Cedex 07, 
France.}}
\begin{document}
\maketitle
\begin{abstract}
In this paper we use the theory of viscosity solutions for Hamilton-Jacobi equations to study propagation phenomena in kinetic equations. We perform the hydrodynamic limit of some kinetic models thanks to an adapted WKB ansatz. Our models describe particles moving according to a velocity-jump process, and proliferating thanks to a reaction term of monostable type. The scattering operator is supposed to satisfy a maximum principle. When the velocity space is bounded, we show, under suitable hypotheses, that the phase converges towards the viscosity solution of some constrained Hamilton-Jacobi equation which effective Hamiltonian is obtained solving a suitable eigenvalue problem in the velocity space. In the case of unbounded velocities, the non-solvability of the spectral problem can lead to different behavior. In particular, a front acceleration phenomena can occur. Nevertheless, we expect that when the spectral problem is solvable one can extend the convergence result.
\end{abstract}

\noindent{\bf Key-Words:} {Kinetic equations, Front propagation, Hyperbolic limit, Hopf-Cole transformation, Spectral problem, Geometric optics approximation.}\\
\noindent{\bf AMS Class. No:} {35Q92, 45K05, 35C07}

\section{Introduction}

In this paper, we aim to study propagation phenomena in some kinetic models. The main motivation for this work comes from the study of pulse waves in bacterial colonies of \textit{Escherichia coli}. Kinetic models have been proposed to describe the run-and-tumble motion of individual bacteria at the mesoscopic scale. It has been shown recently that these kinetic models are much more accurate than their diffusion approximations, see \cite{Saragosti2} and the references therein for details. In this work, and contrary to works on chemotaxis models, we focus on propagation driven by growth effects ({\it à la} Fisher-KPP). This is one major difference between the initial motivation and this paper.
 
We consider a population of cells which is described by a probability density $f$ on $\R^+ \times \R^n \times V$, where $V$ denotes the velocity space, which is a symmetric subset of $\R^n$. We assume that the velocity of cells changes randomly following a velocity-jump process given by some operator $L$ analogous to the scattering operator in radiative transfer theory. We model the cell division with a kinetic nonlinearity of monostable type. Our kinetic model reads 
\begin{equation}\label{KinEq}
\forall (t,x,v) \in \R^+ \times \R^n \times V, \qquad \partial_t f + v \cdot \nabla_x f = L(f) + r \rho \left( M(v) - f \right), 
\end{equation}
where $r \geq 0$ stands for a \textit{growth parameter} and 
\begin{equation*}
\forall (t,x) \in \R^+ \times \R^n, \qquad \rho(t,x) := \int_V f(t,x,v) dv,
\end{equation*} 
is the macroscopic density in position $x$ at time $t$. The \textit{linear} operator $L : L^1(V) \mapsto L^1(V)$ acting only on the velocity variable describes the tumbling in the velocity space and is \textit{mass preserving}, that is 
\begin{equation*}
\forall \varphi \in L_+^1 \left( V \right), \qquad \int_V L( \varphi )(v) dv = 0.
\end{equation*}
We assume that $\text{Ker}(L) = \text{Span}(M)$, where the distribution $M \in \text{Ker}(L)$ is assumed to be nonnegative
 and satisfies 
\begin{equation*}
\int_V M(v) dv = 1, \qquad \int_{V} v M(v) dv = 0, \qquad \int_{V} v^2 M(v) dv < + \infty.
\end{equation*}
We note that $0$ and $M$ are thus stationary solutions of \eqref{KinEq}.  

A first attempt to understand the long time behavior of kinetic equations such as \eqref{KinEq} is to perform scaling limits. Due to the unbiased velocity jump process contained in our model, the diffusive limit seems particularly relevant at first glance. This issue has been particularly studied in the particular case of a BGK equation without any growth term (see \cite{Bardos} and the references therein). As a corollary, the Fisher-KPP equation can be obtained as a parabolic limit of \eqref{KinEq} when $r >0$. The long time behavior of this latter parabolic equation is now well understood since the pioneering works of Kolmogorov-Petrovskii-Piskunov \cite{Kolmogorov} and Aronson-Weinberger \cite{Aronson}. For nonincreasing inital data with sufficiently fast decay at infinity, the solution behaves asymptotically as a travelling front. It is thus natural to study propagation phenomena for kinetic equations such as \eqref{KinEq}. 

Let us emphasize that travelling wave solutions for kinetic equations raised a lot of interest recently. Caflisch and Nicolaenko construct weak shock profiles solutions of the Boltzmann equation using a micro-macro decomposition \cite{Caflisch}. Liu and Yu's main result in \cite{Liu} is the establishment of the positivity of shock profiles for the Boltzmann equation. In  \cite{Cuesta-Schmeiser}, a compactness argument as in \cite{Golse} also proves existence and positivity of big waves for a nonlinear BGK equation. The Caflisch and Nicolaenko micro-macro decomposition has been used to construct waves in a parabolic regime for a particular version of \eqref{KinEq} for the Fisher-KPP equation \cite{Cuesta}. In \cite{Bouin-Calvez-Nadin}, travelling waves have been constructed in the full kinetic regime. Golse \cite{Golse} uses compactness properties to prove existence of big waves for the kinetic Perthame-Tadmor model. 

An important technique to derive the propagating behavior in reaction-diffusion equations is to revisit the Schrödinger WKB expansion to study hyperbolic limits \cite{Freidlin, Evans-Souganidis}. Let us quickly present this approach on the standard Fisher-KPP equation, as it contains all the heuristic ideas needed to understand the present work. This equation reads
\begin{equation}\label{KPP}
\forall (t,x) \in \R^+ \times \R^n, \qquad \partial_t \rho - D \Delta_{xx} \rho = r \rho ( 1 - \rho ), 
\end{equation}
where here $x$ is the \textit{space} variable, and $r, D$ are positive parameters. In the hyperbolic limit $(t,x) \to \left( \frac{t}{\eps} , \frac{x}{\eps} \right)$, we make the so-called \textit{WKB ansatz}: 
\begin{equation}\label{WKBKPP}
\forall (t,x) \in \R^+ \times \R^n, \qquad \rho^{\eps}(t,x) = e^{- \frac{\varphi^{\eps}(t,x)}{\eps}},
\end{equation}
so that the \textit{phase} $\varphi^{\eps}$ is nonnegative and satisfies the following viscous Hamilton-Jacobi equation
\begin{equation}\label{VHJ}
\forall (t,x) \in \R^+ \times \R^n, \qquad \partial_t \varphi^{\eps} + D \vert \nabla_x \varphi^{\eps} \vert^2 + r = \eps D \Delta_x \varphi^{\eps} + r \rho^{\eps}
\end{equation}
The theory of viscosity solutions concerns the locally uniform convergence of $\varphi^{\eps}$ towards $\varphi^0$, the viscosity solution of the following so-called \textit{variational Hamilton-Jacobi equation}
\begin{equation}\label{VHJ2}
\forall (t,x) \in \R^+ \times \R^n, \qquad \min\left \lbrace \partial_t \varphi^0 + D \vert \nabla_x \varphi^0 \vert^2 + r , \varphi^0 \right\rbrace = 0.
\end{equation}
One can find rigorous justifications in \cite{Evans-Souganidis} and complements in \cite{Barles,Barles-Evans,Souganidis,Crandall}. This limit phase contains all the information we need to understand the propagating behavior. More precisely, it is possible to prove \cite{Evans,Barles-Souganidis,Fleming} that in the hyperbolic limit $\eps \to 0$, the population is contained in the nullset of the phase $\varphi^0$. The main interests of this technique is that $\varphi^{\eps}$ can be expected to be more uniformly regular than $\rho^{\eps}$, and that the full theory of Hamilton-Jacobi equations and Lagrangian dynamics can be used to understand the limit equation \eqref{VHJ2}. As an example, studying the nullset of $\varphi^0$, we recover the propagation at the minimal speed $c^* = 2 \sqrt{rD}$ for the previous Fisher-KPP equation. This fruitful WKB technique has also much been used to describe the evolution of dominant phenotypical traits in a given population (see \cite{Lorz,Bouin-Mirrahimi} and the references therein) and also to describe propagation in reaction-diffusion models of kinetic types \cite{Bouin}. 

In \cite{Bouin-Calvez}, the authors have proposed a preliminary work on a BGK equation which combines Hamilton-Jacobi equations and kinetic equations to perform the WKB approach. This latter work shows that it is necessary to stay at the kinetic level to understand the large deviation regime; One misses something while performing the WKB approach on a macroscopic approximation of the BGK equation.

In this work, we develop the results announced in \cite{Bouin-Calvez} for a wider class of linear kinetic equations. We derive rigorously the hydrodynamic limit of \eqref{KinEq} in some special situations given by the hypothesis below. Unless otherwise stated in the sequel, we suppose that $L$ takes the form:
\begin{equation*}
\forall v \in V, \qquad L(f)(v) = P(f) -  \Sigma(v) f,
\end{equation*}
where $\Sigma \in W^{1,\infty}(V)$ and $P$ is a linear operator that satisfies some structural assumptions that we specify below. The examples of such operators to keep in mind are the following 

\begin{example}

Our analysis is able to cover local and non-local situations:
\qquad
\begin{enumerate}
\item Elliptic operators with Neumann boundary conditions on $\partial V$, \textit{e.g.} the Laplacian: $L(f) = P(f) = \Delta f$, $\Sigma \equiv 0$.
\item Kernel operators: $P(f) = \int_{V} K(v,v') f(v') dv'$ and $\Sigma(v) = \int_{V} K(v',v) dv'$, where $K$ is a nonnegative kernel $\left( K \in L_+^{\infty}(V \times V) \right)$.
\end{enumerate}
\end{example}

As for the Fisher-KPP equation \eqref{KPP}, we perform the hyperbolic scaling $\left( t,x,v \right) \to \left( \frac{t}{\eps} , \frac{x}{\eps} ,v \right)$ in \eqref{KinEq}. Note that at this moment we do not rescale the velocity variable. By analogy with \eqref{WKBKPP}, our \textit{kinetic WKB ansatz} writes 
\begin{equation}\label{WKBansatz}
\forall (t,x,v) \in \R^+ \times \R^n \times V, \qquad f^{\eps}(t,x,v) = M(v) e^{-\frac{\varphi^{\eps}(t,x,v)}{\eps}}.
\end{equation}
We assume that initially 
\begin{equation*}
\forall (x,v) \in \R \times V, \qquad 0 \leq f^{\eps}(0,x,v) \leq M(v). 
\end{equation*}
As a consequence, thanks to the maximum principle of Hypothesis (H1) below, the phase $\varphi^{\eps}$ is well defined and remains nonnegative for all times.
Plugging \eqref{WKBansatz} in \eqref{KinEq}, one obtains the following equation for $\varphi^{\eps}$:
\begin{equation}\label{KinEqPhi1}
\forall (t,x,v) \in \R^+ \times \R^n \times V, \qquad \partial_t \varphi^{\eps} + v \cdot \nabla_x \varphi^{\eps} = - \frac{L\left( M(v) e^{ - \frac{\varphi^{\eps}}{\eps} }   \right)}{M(v) e^{ - \frac{\varphi^{\eps}}{\eps} }}  -  r \rho^{\eps} \left( e^{\frac{\varphi^{\eps}}{\eps}} - 1 \right).
\end{equation}
To perform the limiting equation, we would rather define the operator
\begin{equation*}
\mathcal{L}(f) = L(f) + r \left( M(v) \rho - f \right), 
\end{equation*}
and the associated decomposition
\begin{equation*}
\mathcal{P}(f) := L(f) + r M(v) \rho, \qquad \overline \Sigma := \Sigma + r.
\end{equation*}
We can now transform \eqref{KinEqPhi1} on the following form

\begin{equation}\label{KinEqPhi}
\forall (t,x,v) \in \R^+ \times \R^n \times V, \qquad  \partial_t \varphi^{\eps} + v \cdot \nabla_x \varphi^{\eps} + r = - \frac{\mathcal{L} \left( M(v) e^{ - \frac{\varphi^{\eps}}{\eps} }   \right)}{M(v) e^{ - \frac{\varphi^{\eps}}{\eps} }}  + r \rho^{\eps}.
\end{equation}
This formulation is the kinetic equivalent of what was \eqref{VHJ} for the Fisher-KPP case. We shall assume that for all $\eps > 0$, there exists a unique solution $\varphi^\eps \in \mathcal{C}_b^1\left( \R^+ \times \R^n \times V \right)$ of the Cauchy problem associated to \eqref{KinEqPhi} given some initial condition $\varphi^{\eps}(0,x,v) = \varphi_0(x) \in \mathcal{C}_b^1\left( \R^n \right)$. We stress out that if boundary conditions are needed in the velocity variable, they are implicitly contained in the definition of the operator $\L$.

We now formulate our convergence results. For this purpose, let us specify the assumptions on the different operators involved and on the velocity set $V$.

\begin{itemize}
\item[{\bf(H0)}]\label{H0} The velocity set $V \subset \R^n$ is \textbf{\textit{bounded}}.

This hypothesis is very helpful to prove Theorem \ref{HJlimit} and will be discussed and extended in Section \ref{Extensions}. 

\item[{\bf(H1)}]\label{H1} The operator $P$ satisfies a \textit{\textbf{maximum principle}}, which will be used in the following way in the sequel:

Suppose that $Q: V \mapsto V$ is nonnegative and that $u : V \mapsto V$ attains a maximum in $v^0 \in V$. Then 
\begin{equation*}\label{maxprinc}
P\left( Qu \right) (v^0) \leq P(Q)(v^0) u(v^0).
\end{equation*}
\end{itemize}

This first hypothesis is rather standard and strong but nevertheless crucial in viscosity solution procedures. It is structural and not technical. It is also helpful for space and time Lipschitz estimates, see Section \ref{Estimates}. To facilitate Lipschitz estimates in velocity, we will assume a maximum principle for the differentiated operator in velocity. Indeed, in light of the WKB ansatz \eqref{WKBansatz}, let us
assume the following 
\begin{itemize}
\item[{\bf(H2)}] There exists $\mathfrak{U}^\eps : V \mapsto V$, an operator satisfying (H1) with $\mathfrak{U}^\eps \left(1 \right) \leq 0$ and $\mathfrak{B}^\eps$ a bounded function such that,  
\begin{equation*}
\nabla_v \left( \frac{ \mathcal{P}\left( M e^{- \frac{\varphi^\eps}{\eps} } \right) }{M e^{- \frac{\varphi^\eps}{\eps} }} \right) = \mathfrak{B}^\eps - \mathfrak{U}^\eps \left( \nabla_v \varphi^\eps \right),
\end{equation*} 
\end{itemize}
This hypothesis holds for our typical examples, see the dedicated Section \ref{Estimates} below. 

\begin{example}
Let us specify Hypothesis (H2) on our typical examples. For a kernel operator of the form 
\begin{equation*}
\forall v \in V, \qquad L(f)(v) = \int_{V} K(v,v') f(v') dv' - \left( \int_V K(v,v') dv' \right) f(v), 
\end{equation*}
then
\begin{equation*} 
\mathcal{\overline P}(u) = \int_{V} \psi(\cdot,v') f(v') dv', \quad \textrm{with } \quad  \psi(v,v') = \frac{K(v,v')M(v')}{M(v)}.  
\end{equation*}
As a consequence, we have
\begin{equation*}
\mathfrak{B}^\eps = \int_V \nabla_v \psi(v,v') \left[ e^{ \frac{\varphi^\eps(v) - \varphi^\eps(v')}{\eps} } - 1\right] dv', \qquad \mathfrak{U}^\eps= - \frac{1}{\eps} \vert \nabla_v \varphi^\eps \vert \int_V \psi(v,v')  \left[ e^{ \frac{\varphi^\eps(v) - \varphi^\eps(v')}{\eps} }\right] dv'.
\end{equation*}
Hypothesis (H2) is satisfied after Proposition \ref{estimate} (i), (ii), (iii). 
%
Now we come to elliptic operators. As an example, let us consider
\begin{equation*}
\forall v \in V, \qquad L(f)(v) = \nabla_v \left( D \nabla_v f \right),
\end{equation*}
with Neumann boundary conditions on $\partial V$. The diffusivity matrix $D$ is here assumed to be positive definite. Then the hypothesis (H1) and (H2) are well satisfied, with 
\begin{equation*}
\mathfrak{B^\e} = 0, 
\quad \mathfrak{U^\e}\left( \cdot  \right) = \frac{1}{\e} \nabla_v\left( D \nabla_v \left( \cdot \right) \right) - \frac{1}{\e^2}
\left\langle \cdot 
\vert \left( D + D^\top \right) \nabla_v \left( \cdot \right)  
\right\rangle.
\end{equation*}


\end{example}

We finally need to state a structural hypothesis on $\mathcal{P}$ in order to characterize the behavior with respect to $v$ in the limit. Roughly speaking, we need coercivity. 
\begin{itemize}
\item[{\bf(H3)}] There exists a linear operator $\mathfrak{\overline U}^\eps$ which satisfies the maximum principle of Hypothesis (H1), a continuous and nonnegative Hamiltonian $\mathfrak{N} : \R \times \R \mapsto \R^+$ such that every viscosity solution of $\mathfrak{N}\left( u , \nabla_v u \right) = 0$ is constant, and $\alpha, \beta > 0$, such that the following inequality holds true
\begin{equation*}
\forall v \in V, \qquad \mathfrak{N}\left( \varphi^\eps , \nabla_v \varphi^\eps \right) - \e^\alpha \mathfrak{\overline U^\e}\left( \varphi^\eps \right) \leq \eps^\beta \left \vert \frac{ \mathcal{P} \left( M e^{- \frac{\varphi^\eps}{\eps} } \right) }{ M e^{- \frac{\varphi^\eps}{\eps} }} \right \vert.  
\end{equation*}
\end{itemize}
\begin{example}
For a kernel operator, one has
\begin{equation*}
\mathfrak{N}(u,\nabla_v u) = \int_V \psi(v,v') \left\vert u(v) - u(v') \right\vert_+   dv', \qquad \mathfrak{\overline{U}^\eps} \equiv 0.
\end{equation*}
For the Laplacian equation, one has 
$\mathfrak{N}(u,\nabla_v u) = \vert \nabla_v u \vert^2 $ and $\mathfrak{\overline{U}^\eps} \equiv \Delta$. 
\end{example}

Let us now state our kinetic convergence result in the Theorem \ref{HJlimit} below. The main difficulty in the kinetic framework is to understand what to do with the velocity variable in the limit $\eps \to 0$. Roughly speaking, we will show that up to extraction, $\varphi^{\eps}$ converges towards a viscosity solution of an Hamilton-Jacobi equation, whose effective Hamiltonian is obtained through an eigenvalue problem in the velocity variable that we write in (H4) below. In fact, the limiting phase $\varphi^0$ will be independent from the velocity variable, but the kinetic nature of the $\eps$-problem is contained in this following spectral problem. We notice finally that, the roles of the velocity variable $v$ and the spectral problem in (H4) below are respectively similar to  the ones of the fast variable and the cell problem in homogenization theory.\\

\begin{itemize}
\item[{\bf(H4)}]{\textbf{\textit{Spectral problem.}}}\label{EVpb}
For all $p \in \R^n$, there exists a unique $\mathcal{H}(p)$ such that there exists a positive normalized eigenvector $Q_p \in L^1(V)$ such that 
\begin{equation}\label{eigenpb}
\forall v \in V, \qquad \mathcal{L}(Q_p)(v) + \left( v \cdot p \right) Q_p(v) = \mathcal{H}(p) Q_p(v).
\end{equation}
Moreover, $\mathcal{H}$ and $Q_p$ are smooth functions of $p$.
\end{itemize}

Section \ref{EvPb} is devoted to giving relevant conditions on the operator $\mathcal{L}$ which ensure that \eqref{eigenpb} has a solution. We also provide there some classical examples. We are now ready to state the main result:
 
\begin{theorem}{\textbf{\text{Hamilton-Jacobi limit.}}}\label{HJlimit}

Let $V$ be a symmetric subset of $\, \R^n$ satisfying (H0), $M \in L^1(V)$ be nonnegative and symmetric and $r \geq 0$. Suppose that the initial data is well-prepared,  
\begin{equation*}
\forall (x,v) \in \R^n \times V, \qquad \varphi^{\eps}(0,x,v) = \varphi_0(x),
\end{equation*}
and that the Hypotheses (H1), (H2), (H3) and (H4) are satisfied. Then, $\left( \varphi^{\eps} \right)_\eps$ converges locally uniformly towards $\varphi^0$, where $\varphi^0$ does not depend on $v$. Moreover $\varphi^0$ is the unique viscosity solution of one of the following Hamilton-Jacobi equations:

\begin{itemize}
\item[(i)] If $r=0$, then $\varphi^0$ solves the standard Hamilton-Jacobi problem
\begin{equation}\label{standHJ}
\begin{cases}
\partial_t \varphi^0 + \mathcal{H} \left(\nabla_x \varphi^0 \right) = 0, \qquad  \forall (t,x) \in \R_+^* \times \R^n, \medskip\\
\varphi^{0}(0,x) = \varphi_0(x), \qquad x \in \R^n.
\end{cases}
\end{equation}

\item[(ii)] If $r > 0$, then the limiting equation is the following constrained Hamilton-Jacobi equation
\begin{equation}\label{varHJ}
\begin{cases}
\min\left \lbrace \partial_t \varphi^0 + \mathcal{H} \left(\nabla_x \varphi^0 \right) + r , \varphi^0 \right\rbrace = 0, \qquad  \forall (t,x) \in \R_+^* \times \R^n, \medskip \\
\varphi^{0}(0,x) = \varphi_0(x), \qquad x \in \R^n.
\end{cases}
\end{equation}
\end{itemize}
where in both cases $\mathcal{H}(p)$ is an Hamiltonian given by (H4).
\end{theorem}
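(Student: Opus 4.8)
The plan is to follow the perturbed test function method of Evans in the relaxed (half-limit) framework of Barles--Perthame, the whole argument resting on the a priori bounds of Section \ref{Estimates}. First I would record the consequences of (H1) and (H2): the nonnegativity and the uniform-in-$\eps$ $L^\infty$ bound of $\varphi^\eps$, together with uniform Lipschitz estimates in $t$, $x$ and $v$. These are exactly what the maximum principle for $P$ and for the differentiated operator $\mathfrak U^\eps$ are designed to produce, and they guarantee that the half-relaxed limits
\begin{equation*}
\overline{\varphi}(t,x) = \limsup_{\substack{\eps\to 0\\ (s,y,w)\to(t,x)}} \varphi^\eps(s,y,w), \qquad \underline{\varphi}(t,x) = \liminf_{\substack{\eps\to 0\\ (s,y,w)\to(t,x)}} \varphi^\eps(s,y,w)
\end{equation*}
are finite, with $\underline{\varphi}\le\overline{\varphi}$. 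The goal is to prove that both are, respectively, a viscosity sub- and supersolution of the announced problem (\eqref{standHJ} or \eqref{varHJ}), that they are $v$-independent, and then to close the argument by the comparison principle.

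Second, I would establish that the limits do not depend on $v$. The estimates on $\mathcal P(Me^{-\varphi^\eps/\eps})/(Me^{-\varphi^\eps/\eps})$ from Section \ref{Estimates} make the right-hand side of the inequality in (H3) an $O(\eps^\beta)$ quantity; passing to the relaxed limit and using stability of viscosity solutions, $\overline{\varphi}$ and $\underline{\varphi}$ are subsolutions of $\mathfrak N(u,\nabla_v u)\le 0$ in the velocity variable (the corrector term $\eps^\alpha\overline{\mathfrak U}^\eps$ vanishes by the same estimates). Since $\mathfrak N\ge 0$, they are in fact solutions of $\mathfrak N(u,\nabla_v u)=0$, and the structural assumption in (H3) that such solutions are constant forces the limits to be independent of $v$. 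From now on I treat $\overline{\varphi},\underline{\varphi}$ as functions of $(t,x)$ alone.

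Third comes the identification of the effective Hamiltonian, which is the crux. Let $\phi(t,x)$ be a smooth test function touching $\overline{\varphi}$ from above at $(t_0,x_0)$, set $p=\nabla_x\phi(t_0,x_0)$, and let $Q_p$ be the positive eigenvector from (H4). Following Evans I perturb the test function as $\phi^\eps(t,x,v)=\phi(t,x)-\eps\log\!\left(Q_p(v)/M(v)\right)$, so that $M e^{-\phi^\eps/\eps}=e^{-\phi/\eps}Q_p$ and, because $\mathcal L$ acts only on $v$,
\begin{equation*}
-\frac{\mathcal L\!\left(Me^{-\phi^\eps/\eps}\right)}{Me^{-\phi^\eps/\eps}} = -\frac{\mathcal L(Q_p)}{Q_p} = v\cdot p - \mathcal H(p),
\end{equation*}
by the eigenrelation \eqref{eigenpb}. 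Evaluating \eqref{KinEqPhi} at a maximum point $(t^\eps,x^\eps,v^\eps)$ of $\varphi^\eps-\phi^\eps$, the terms $v^\eps\cdot\nabla_x\phi$ and $v^\eps\cdot p$ cancel up to errors controlled by the smoothness of $\phi$ and $Q_p$ near $(t_0,x_0)$, leaving $\partial_t\phi+\mathcal H(p)+r-r\rho^\eps\le o(1)$. For $r>0$ the constraint structure appears through $\rho^\eps=\int_V Me^{-\varphi^\eps/\eps}\,dv$: where $\overline{\varphi}(t_0,x_0)>0$ one has $\rho^\eps\to 0$ locally, yielding $\partial_t\phi+\mathcal H(p)+r\le 0$, whereas $\overline{\varphi}\ge 0$ (inherited from $\varphi^\eps\ge 0$) handles the case $\overline{\varphi}(t_0,x_0)=0$; together these give the subsolution inequality for \eqref{varHJ}. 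The symmetric computation with a test function touching $\underline{\varphi}$ from below gives, using only $\rho^\eps\ge 0$, the supersolution inequality $\partial_t\phi+\mathcal H(p)+r\ge 0$; the case $r=0$ is the same computation with the $\rho^\eps$ term absent, producing \eqref{standHJ}.

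Finally I would check the initial condition through a boundary-layer/barrier argument exploiting the well-prepared, $v$-independent data $\varphi_0$, so that $\overline{\varphi}(0,\cdot)\le\varphi_0\le\underline{\varphi}(0,\cdot)$, and then invoke the comparison principle for \eqref{standHJ}, resp. \eqref{varHJ} (available since $\mathcal H$ is continuous by (H4)) to obtain $\overline{\varphi}\le\underline{\varphi}$. Combined with $\underline{\varphi}\le\overline{\varphi}$ this yields equality, hence a continuous limit $\varphi^0$ and locally uniform convergence. The main obstacle is the treatment of the velocity variable: $v$ is an unscaled ``fast'' variable, so controlling $\nabla_v\varphi^\eps$ uniformly (for (H2)--(H3) to bite) and ensuring the perturbed test function remains admissible up to $\partial V$, where the boundary conditions are only implicitly encoded in $\mathcal L$, is the delicate part; the eigenvalue problem (H4) is precisely what converts this velocity dependence into a well-defined effective Hamiltonian.
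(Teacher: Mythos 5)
Your proposal follows the same core strategy as the paper: the $W^{1,\infty}$ bounds from (H1)--(H2), the elimination of the velocity variable through (H3) and the stability of viscosity solutions of $\mathfrak{N}(u,\nabla_v u)=0$, the perturbed test function $\phi-\eps\log(Q_p/M)$ built from the eigenvector of (H4) so that the eigenrelation turns $-\mathcal{L}(Q_p)/Q_p$ into $v\cdot p-\mathcal{H}(p)$, the maximum principle at the extremum point $(t^\eps,x^\eps,v^\eps)$, and the dichotomy on $\rho^\eps$ to produce the obstacle structure when $r>0$. The two genuine differences are organizational. First, you freeze the corrector at $p=\nabla_x\phi(t_0,x_0)$ rather than letting it vary as $Q_{\nabla_x\psi^0(t,x)}$ as the paper does; this is slightly cleaner (no $x$-derivative of the corrector to control, only the $o(1)$ error $v^\eps\cdot(p-\nabla_x\phi(t^\eps,x^\eps))$, harmless since $V$ is bounded), at the price of needing the smoothness of $p\mapsto Q_p$ only qualitatively. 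Second, you use Barles--Perthame half-relaxed limits where the paper extracts a locally uniformly convergent subsequence by Ascoli and only afterwards invokes uniqueness of the limit problem to upgrade to full convergence; both routes end with the same comparison principle, and your use of estimate (iii) to get $|\varphi^\eps(t,\cdot)-\varphi_0|\le Ct$, hence attainment of the initial data by both relaxed limits, is a legitimate shortcut past the paper's viscosity-sense initial-condition computation.

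There is, however, one step where the half-relaxed framework does not deliver what you claim. In the subsolution argument for $r>0$ you assert that $\overline{\varphi}(t_0,x_0)>0$ forces $\rho^\eps=\int_V M e^{-\varphi^\eps/\eps}\,dv\to 0$ near $(t_0,x_0)$. This requires a \emph{lower} bound $\varphi^\eps\ge\delta>0$ uniformly in $v$ along the sequence of maximum points, i.e.\ information about $\underline{\varphi}$, not $\overline{\varphi}$: positivity of a $\limsup$ does not prevent $\varphi^\eps$ from dipping to $0$ along the subsequence you are working with, and the $v$-Lipschitz constant $Ct$ does not vanish as $\eps\to0$, so positivity at $v^\eps$ alone does not control the integral over $V$. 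The paper sidesteps this entirely by first securing locally uniform convergence of $\varphi^\eps$ (Ascoli plus Proposition \ref{estimate}), after which $\varphi^0(t_0,x_0)>0$ does give $\varphi^\eps(t^\eps,x^\eps,v)\ge\delta$ for all $v$ and small $\eps$. To keep your structure you would either need to prove the subsolution inequality on $\{\underline{\varphi}>0\}$ and argue separately, or simply replace the half-relaxed limits by the compactness extraction --- all the estimates needed for that are already in your first step.
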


We point out that the assumption concerning the non-dependency on $v$ of the initial data $\varphi^{\eps}(t = 0,\cdot)$ in Theorem \ref{HJlimit} is to avoid a boundary layer in $t=0$ when $\eps \to 0$. The result can be easily extended to the case of an initial condition with small velocity perturbations, that is $\lim_{\eps \to 0} \varphi^\eps(0,x,v) = \varphi_0(x)$ uniformly in $(x,v) \in \R \times V$.

Our paper is organized as follows. The following Section \ref{Estimates} proves $W^{1,\infty}$ type estimates on $\varphi^{\eps}$ after assuming Hypothesis (H1) and (H2). In Section \ref{HJProof}, we provide the proof of Theorem \ref{HJlimit}. We dedicate Section \ref{EvPb} to solving the eigenvalue problem of Hypothesis (H4) which gives the Hamiltonian $\mathcal{H}$ in some particular situations. We conclude this first part of results with a Section \ref{Asymptotics}, giving refined asymptotics on $\varphi^\eps$, and recalling some elements to study the speed of propagation of the fronts when the constrained Hamilton-Jacobi equation \eqref{varHJ} is derived, following \cite{Evans-Souganidis,Freidlin,Fedotov}. The last Section \ref{Extensions} is devoted to discussing the results when the velocity set is unbounded. We put forward the fact that when the spectral problem of Hypothesis (H4) is not solvable, a front acceleration can occur. Finally, we show two cases for which Hypothesis (H4) holds and where we expect the convergence result to be also true in the whole space despite additional difficulties.

\section{The phase $\varphi^\eps$ is uniformly Lipschitz.}{\label{Estimates}}

In this Section, we derive some \textit{a priori} estimates on $\varphi^{\eps}$ mainly thanks to the maximum principle contained in Hypothesis (H1) and (H2).

\begin{proposition}\label{estimate}
Let $r\geq 0$ and $\varphi^{\eps} \in \mathcal{C}_b^1 \left( \R^+ \times \R \times V \right)$ a solution of equation \eqref{KinEqPhi}. Suppose that (H0) and the structural assumptions on $\L$, (H1) and (H2), hold. Then the phase $\varphi^\eps$ is uniformly locally Lipschitz. Precisely the following \textit{a priori} bounds hold:

$\exists \, C > 0, \forall t \in \R^+$,
\begin{equation*}
\begin{array}{lclc}
(i) & 0 \leq \varphi^{\eps}(t, \cdot) \leq \Vert \varphi^0 \Vert_{\infty}, & (ii) &\Vert \nabla_x \varphi^{\eps} ( t , \cdot )\Vert_{\infty} \leq \Vert \nabla_x \varphi^{0} \Vert_{\infty}, \medskip\\
(iii) & \Vert \partial_t \varphi^{\eps} (t, \cdot ) \Vert_\infty \leq V_{\text{max}} \Vert \nabla_x \varphi_{0} \Vert_\infty, & (iv) &  \quad \Vert \nabla_v  \varphi^{\eps} (t,\cdot)\Vert_{\infty} \leq C t. \\
\end{array}
\end{equation*}
\end{proposition}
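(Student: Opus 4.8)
The plan is to establish each of the four bounds by exploiting the maximum principle of Hypothesis (H1), applied to auxiliary functions built from $\varphi^\eps$ and its derivatives, following the classical strategy for deriving $W^{1,\infty}$ estimates in viscosity-solution theory. The key observation throughout is that equation \eqref{KinEqPhi} has the structure of a transport equation in $(t,x,v)$ whose right-hand side contains the nonlocal operator $-\mathcal{L}(Me^{-\varphi^\eps/\eps})/(Me^{-\varphi^\eps/\eps})$, and that this operator inherits a comparison principle from (H1). I would treat the bounds in the order $(i)$, $(ii)$, $(iii)$, $(iv)$, since the later estimates rely on the earlier ones.

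\textbf{Bounds $(i)$ and $(ii)$.} For the lower bound $\varphi^\eps \geq 0$ in $(i)$, I would argue directly from the WKB ansatz: the assumption $0 \leq f^\eps(0,\cdot) \leq M$ together with the maximum principle guarantees $0 \leq f^\eps \leq M$ for all times, which forces $\varphi^\eps \geq 0$ via \eqref{WKBansatz}. The upper bound $\varphi^\eps \leq \|\varphi_0\|_\infty$ and the gradient bound $(ii)$ both follow by a comparison/doubling argument: I would consider the quantity $\psi^\eps := \varphi^\eps - \|\varphi_0\|_\infty$ (respectively a difference quotient $\varphi^\eps(t,x+h,v) - \varphi^\eps(t,x,v)$ for $(ii)$), locate its maximum in $x$ (and $v$), and use that at an interior spatial maximum the transport term $v\cdot\nabla_x\varphi^\eps$ vanishes while (H1) controls the sign of the scattering contribution. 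Since the initial phase is $v$-independent and equals $\varphi_0(x)$, the $x$-Lipschitz constant cannot increase in time, giving $(ii)$. Here the well-preparedness of the data is essential to start the induction with the correct constant.

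\textbf{Bound $(iii)$.} The time-derivative estimate should follow from $(ii)$ almost immediately. Differentiating \eqref{KinEqPhi} in $t$ is one route, but the cleaner approach is to use the equation itself at $t=0$ together with finite speed of propagation: since $\partial_t\varphi^\eps = -v\cdot\nabla_x\varphi^\eps + (\text{scattering} + \text{reaction terms})$ and the latter terms are controlled by the maximum principle, the dominant contribution is the transport term, bounded by $V_{\max}\|\nabla_x\varphi^\eps\|_\infty \leq V_{\max}\|\nabla_x\varphi_0\|_\infty$ using $(ii)$, where $V_{\max} = \sup_{v\in V}|v|$ is finite by (H0). I would make this rigorous by differentiating the equation in $t$ and running the same maximum-principle comparison as for $(ii)$, with the initial time-derivative bound read off from the equation at $t=0$.

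\textbf{Bound $(iv)$ and the main obstacle.} The velocity-Lipschitz estimate is where Hypothesis (H2) enters and is the crux of the argument. Differentiating \eqref{KinEqPhi} in $v$ yields a transport equation for $w^\eps := \nabla_v\varphi^\eps$ in which the troublesome term $\nabla_v\big(\mathcal{P}(Me^{-\varphi^\eps/\eps})/(Me^{-\varphi^\eps/\eps})\big)$ appears; by (H2) this equals $\mathfrak{B}^\eps - \mathfrak{U}^\eps(\nabla_v\varphi^\eps)$, where $\mathfrak{B}^\eps$ is bounded and $\mathfrak{U}^\eps$ satisfies (H1) with $\mathfrak{U}^\eps(1)\leq 0$. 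The plan is to examine the evolution of $\max_v |w^\eps|$: at a point where $|w^\eps|$ is maximal the operator $\mathfrak{U}^\eps$ produces a favorable sign (this is exactly why (H2) is imposed in that form), so the only growth comes from the bounded forcing $\mathfrak{B}^\eps$ and the explicit differentiation of $v\cdot\nabla_x\varphi^\eps$, which contributes $\nabla_x\varphi^\eps$, bounded by $(ii)$. This yields a differential inequality of the form $\frac{d}{dt}\|\nabla_v\varphi^\eps\|_\infty \leq C$, hence the \emph{linear-in-time} bound $\|\nabla_v\varphi^\eps(t,\cdot)\|_\infty \leq Ct$ after integrating from the $v$-independent initial datum (so $\nabla_v\varphi^\eps(0,\cdot)=0$). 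The hardest part will be making the maximum-principle argument rigorous at a point where $|w^\eps|$ rather than $w^\eps$ itself is maximal, since one must differentiate $|\nabla_v\varphi^\eps|$ and handle the nonlocal operator $\mathfrak{U}^\eps$ acting on a vector-valued quantity at its maximum; I expect this to require care in reducing to a scalar problem (e.g. testing against the unit vector realizing the maximum) and in verifying that the boundary conditions on $\partial V$ encoded in $\mathcal{L}$ do not spoil the sign. The gain of a full power of $\eps$ hidden in $\mathfrak{U}^\eps$ is what ultimately allows the singular $1/\eps$ factors in the definition of $\mathfrak{U}^\eps$ to act in the right direction rather than blow up.
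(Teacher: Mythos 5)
Your overall strategy coincides with the paper's: all four bounds are obtained by maximum-principle arguments applied to $\varphi^\eps$ and its first derivatives, with (H1) giving the sign of the scattering contribution at a point of extremum in $v$, and (H2) doing exactly the job you assign to it in part (iv). Parts (i), (iii) and (iv) are sketched essentially as in the paper. There is, however, one genuine gap in your treatment of (ii) when $r>0$. Differentiating \eqref{KinEqPhi} in $x$ produces not only the commutator-type term that (H1) controls, but also the term $r\nabla_x\rho^\eps = -\tfrac{r}{\eps}\int_V M(v')e^{-\varphi^\eps(v')/\eps}\nabla_x\varphi^\eps(v')\,dv'$, which carries a factor $1/\eps$ and is \emph{not} a ``scattering contribution'' with a sign given by (H1); a naive Gr\"onwall estimate on it would yield a Lipschitz bound deteriorating like $e^{Ct/\eps}$. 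The paper has to extract the BGK part from $\L$ (i.e.\ return to the original operator $L$), move the favorable piece $\tfrac{r}{\eps}\rho^\eps\vert\partial_{x_i}\varphi^\eps\vert$ to the left-hand side, and then check that the remaining integral in \eqref{estGx2}, namely $\tfrac{r}{\eps}(e^{\varphi^\eps/\eps}-1)\int_V M(v')e^{-\varphi^\eps(v')/\eps}\bigl(\vert\partial_{x_i}\varphi^\eps(v_\delta)\vert-\mathrm{sgn}(\partial_{x_i}\varphi^\eps(v_\delta))\partial_{x_i}\varphi^\eps(v')\bigr)dv'$, is nonnegative precisely because the test point maximizes $\vert\partial_{x_i}\varphi^\eps\vert$ over $v$. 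Your proposal never engages with this term, and without it the claimed monotonicity of the $x$-Lipschitz constant does not follow.

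A secondary, more routine omission: you propose to ``locate the maximum in $x$'' of quantities such as $\varphi^\eps-\Vert\varphi_0\Vert_\infty$ or a difference quotient, but $x$ ranges over all of $\R^n$ and such a supremum need not be attained; moreover, at an attained interior maximum one only gets $\partial_t\varphi^\eps\geq 0$ and $\nabla_x\varphi^\eps=0$, which is consistent with the equation and yields no contradiction. The paper's device is the penalization $\psi^\eps_\delta=(\cdot)-\delta t-\delta^4\vert x\vert^2$: the quadratic term forces attainment of the maximum, the linear-in-$t$ term makes the time inequality strict ($\partial_t\varphi^\eps\geq\delta$), and the contradiction comes from the incompatibility of the two resulting bounds $\vert x_\delta\vert\geq 1/(2\delta^3 V_{\max})$ and $\vert x_\delta\vert\leq\Vert\varphi^\eps\Vert_\infty^{1/2}/\delta^2$ for small $\delta$. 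This is standard and you clearly know the toolbox, but as written your argument for the upper bound in (i) and for (ii) does not close.
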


\begin{proof}[\bf Proof of Proposition \ref{estimate}]

Let us first prove (i). We define $\psi_{\delta}^{\eps} (t,x,v) = \varphi^{\eps}(t,x,v) - \delta t - \delta^4 \vert x \vert^2$. As $V$ is bounded and $\psi_{\delta}^{\eps} $ is coercive in the space-time variable, for any $\delta > 0$, $\psi_{\delta}^{\eps}$ attains a maximum at point $(t_{\delta},x_{\delta},v_{\delta})$. Suppose that $ t_{\delta} > 0$. Then, we have 
\begin{equation*} 
\partial_t \varphi^{\eps} (t_{\delta},x_{\delta},v_{\delta}) \geq \delta, \qquad \nabla_{x} \varphi^{\eps} (t_{\delta},x_{\delta},v_{\delta}) = 2\delta^4 x_{\delta}.
\end{equation*}
Moreover, thanks to the maximum principle of the operator $P$, we get:
\begin{equation*}
\begin{array}{lcl}
\mathcal{L} \left( M e^{ - \frac{\varphi^{\eps} (t_{\delta},x_{\delta},\cdot) }{\eps} } \right)( v_{\delta} ) & \geq & 0.
\end{array}
\end{equation*}
As a consequence, we have at the maximum point $(t_{\delta},x_{\delta},v_{\delta})$:
\begin{equation}\label{ppmax}
0 \geq - \frac{\mathcal{L}(M e^{ - \frac{\varphi^{\eps} (t_{\delta},x_{\delta},\cdot) }{\eps} })(v_{\delta})}{M(v_{\delta}) e^{ - \frac{\varphi^{\eps}}{\eps} }} + r \left( \rho^{\eps} - 1 \right)
 \geq \delta + 2 \delta^4 v_{\delta}  \cdot x_{\delta}  \geq \delta - 2 \delta^4 V_{max} \vert x_{\delta}\vert.
\end{equation}
From what we deduce
\begin{equation}\label{cont2}
\vert x_{\delta}\vert \geq \frac{1}{2 \delta^3 V_{max}}.
\end{equation}
Moreover, the maximal property of $(t_{\delta},x_{\delta},v_{\delta})$  also implies 
\begin{equation*}
\Vert  \varphi^{\eps} \Vert_{\infty} - \delta^4 \vert x_{\delta} \vert^2  \geq \varphi^{\eps}(t_{\delta},x_{\delta},v_{\delta}) - \delta t_{\delta} - \delta^4 \vert x_{\delta} \vert^2  \geq \varphi^{\eps}(0,0,0) \geq 0 \,,
\end{equation*} 
and this gives 
\begin{equation}\label{cont1}
\vert x_{\delta}\vert \leq \frac{\Vert  \varphi^{\eps} \Vert_{\infty}^{\frac12}}{ \delta^2}.
\end{equation}
Gathering \eqref{cont1} and \eqref{cont2}, we obtain a contradiction since both cannot hold  
for sufficiently small $\delta>0$. As a consequence  $t_{\delta} = 0$, and we have,
\begin{equation*}
\forall (t,x,v) \in [0,T] \times \R^n \times V, \qquad \varphi^{\eps}(t,x,v) \leq \varphi^0(x_{\delta},v_{\delta}) + \delta t + \delta^4 \vert x \vert^2 \leq \Vert \varphi_0 \Vert_{\infty} + \delta t + \delta^4 \vert x \vert^2.
\end{equation*}
Passing to the limit $\delta \to 0$, we obtain the claim (i).

We now come to the proof of (ii). We also use maximum principle arguments, which are possible without any supplementary hypothesis on the structure of the operator $\mathcal{L}$ since this latter operator just acts on the velocity variable. Differentiating equation \eqref{KinEqPhi} with respect to the space variable, we obtain  
\begin{equation}\label{estGx1}
\begin{array}{lcl}
\left( \partial_t + v \cdot \nabla_x \right) \left( \nabla_x \varphi^{\eps} \right) & = & \frac{1}{\eps} \left( \frac{  \mathcal{L} \left( M e^{ - \frac{\varphi^{\eps}(t,x,\cdot)}{\eps} } \nabla_x \varphi^{\eps}(t,x,\cdot) \right) - \mathcal{L} \left( M e^{ - \frac{\varphi^{\eps}(t,x,\cdot)}{\eps} } \right) \nabla_x \varphi^{\eps}  }{M e^{ - \frac{\varphi^{\eps}}{\eps} } } \right) + r \nabla_ x \rho^{\eps}. \\&&
\end{array}
\end{equation}
In order to control $\nabla_x \rho^\eps$ when $r > 0$, we need to extract from $\L$ the BGK part we put in (and thus come back to the main operator $L$). Doing so and now testing \eqref{estGx1} on  
$\text{sgn}\left( \partial_{x_i} \varphi^{\eps} \right) e_i$, we obtain:
\begin{multline}\label{estGx2}
\left( \partial_t + v \cdot \nabla_x \right) \left( \vert \partial_{x_i} \varphi^{\eps} \vert \right) + \frac{r }{\eps} \rho^{\eps} \vert \partial_{x_i} \varphi^{\eps} \vert \leq \\ \frac{1}{\eps} \left( \frac{  L \left( M e^{ - \frac{\varphi^{\eps}(t,x,\cdot)}{\eps} } \partial_{x_i} \varphi^{\eps} (t,x,\cdot) \right) \text{sgn}\left( \partial_{x_i} \varphi^{\eps} \right)  - L \left( M e^{ - \frac{\varphi^{\eps}(t,x,\cdot)}{\eps} } \right)  \vert \partial_{x_i} \varphi^{\eps} \vert   }{M e^{ - \frac{\varphi^{\eps}}{\eps} } } \right)\\ - \frac{r}{\eps}\left( e^{ \frac{\varphi^{\eps}}{\eps} } - 1 \right) \int_{V} M(v') e^{ - \frac{\varphi^{\eps} (v') }{\eps}      } \left( \vert \partial_{x_i} \varphi^{\eps} (v) \vert - \text{sgn}\left( \partial_{x_i} \varphi^{\eps}(v) \right) \partial_{x_i} \varphi^{\eps} (v') \right) dv',
\end{multline}
%
since
\begin{equation*}
\text{sgn}\left( \partial_{x_i} \varphi^{\eps} \right) \nabla_x \rho^{\eps} \cdot e_i + \frac{\rho^{\eps}}{\eps} \vert \partial_{x_i} \varphi^{\eps} \vert = \frac{1}{\eps} \int_{V} M(v') e^{ - \frac{\varphi^{\eps} (v') }{\eps}      } \left( \vert \partial_{x_i} \varphi^{\eps} (v) \vert - \text{sgn}\left( \partial_{x_i} \varphi^{\eps}(v) \right) \partial_{x_i} \varphi^{\eps} (v') \right) dv'.
\end{equation*}

As for the uniform bound on $\varphi^\eps$, we conclude by performing a $\delta -$correction argument. Define, for a positive $\delta$, the auxiliary function $\psi_{\delta , i }^{\eps} = \vert \partial_{x_i} \varphi^{\eps} \vert - \delta t - \delta^4 \vert x \vert^2$. It attains a maximum in $(t,x,v)_{\delta}$. Let us now consider the two r.h.s of \eqref{estGx2} separately. First, in $(t,x,v)_{\delta}$, one has thanks to the maximum principle \eqref{maxprinc} and the independency of $L$ from the $(t,x)$ variables
\begin{equation*}
L \left( M e^{ - \frac{\varphi^{\eps}(t,x,\cdot)}{\eps} } \partial_{x_i} \varphi^{\eps} (t,x,\cdot) \right) \text{sgn}\left( \partial_{x_i} \varphi^{\eps} \right)  - L \left( M e^{ - \frac{\varphi^{\eps}(t,x,\cdot)}{\eps} } \right)  \vert \partial_{x_i} \varphi^{\eps} \vert  \leq 0.
\end{equation*}
Then, to prove that the second part of the r.h.s is nonpositive, we write
\begin{multline*}
\forall v' \in V, \qquad \text{sgn}\left( \partial_{x_i} \varphi^{\eps}(v_\delta) \right) \partial_{x_i} \varphi^{\eps} (v') - \delta t_\delta - \delta^4 \vert x_\delta \vert^2 \\ \leq  \vert \partial_{x_i} \varphi^{\eps} (v') \vert - \delta t_\delta - \delta^4 \vert x_\delta \vert^2 \leq \vert \partial_{x_i} \varphi^{\eps} (v_\delta) \vert - \delta t_\delta - \delta^4 \vert x_\delta \vert^2,
\end{multline*}
which gives the property
\begin{equation*}
\forall v' \in V, \qquad \vert \partial_{x_i} \varphi^{\eps} (v_\delta) \vert - \text{sgn}\left( \partial_{x_i} \varphi^{\eps}(v_\delta) \right) \partial_{x_i} \varphi^{\eps} (v') \geq 0.
\end{equation*}
Combining these two inequalities give, at the point of maximum:
\begin{equation*}
\delta + 2 \delta^4 v_{\delta}  \cdot x_{\delta} + \frac{r\rho^{\eps}}{\eps} \vert \partial_{x_i} \varphi^{\eps} \vert \leq 0.
\end{equation*}
The conclusion is similar to the uniform bound of $\varphi^\eps$: The maximum cannot be attained elsewhere than in $t_\delta = 0$, and the estimate is proved.

With exactly the same method, we get that necessarily $ \Vert \partial_t \varphi^{\eps} \Vert_{\infty} \leq \vert \partial_t \varphi^{\eps} ( 0 )\vert $. But, passing to the limit $ t \to 0 $ in \eqref{KinEqPhi1}, and since $\varphi^0$ does not depend on $v$, one gets $ \vert \partial_t \varphi^{\eps} ( 0 )\vert  \leq V_{max} \Vert \nabla_x \varphi_0 \Vert_{\infty}$. This gives (iii).

We finally come to the proof of the bound on the velocity gradient. This proof clearly requires a supplementary assumption on the operator $\mathcal{L} $ to be able to write an useful equation on $\vert \nabla_v \varphi^\eps\vert$. We have made the choice of a maximum principle for the derivative operator. Again, differentiating \eqref{KinEqPhi} with respect to $v$ and using Hypothesis (H2) yield
\begin{equation*}
\begin{array}{lcl}
\left( \partial_t + v \cdot \nabla_x \right) \left( \nabla_v \varphi^{\eps} \right) + \nabla_x \varphi^\eps & = & - \nabla_v \left( \frac{\mathcal{\overline P}\left( e^{ - \frac{\varphi^{\eps}}{\eps} }   \right)}{e^{ - \frac{\varphi^{\eps}}{\eps} }}  \right) + \nabla_v \Sigma \\
& = & \nabla_v \Sigma - \mathfrak{B^\e} + \mathfrak{U^\e} \left( \nabla_v \varphi^\eps \right)
\end{array}
\end{equation*}
We now test against $\frac{\nabla_v \varphi^\eps}{\vert \nabla_v \varphi^\eps \vert}$:
\begin{equation*}
\left( \partial_t + v \cdot \nabla_x \right) \left( \vert \nabla_v \varphi^{\eps} \vert \right)  \leq  \Vert \nabla_x \varphi^0 \Vert_{\infty} + \Vert \nabla_v \Sigma\Vert_{\infty} + \Vert \mathfrak{B^\e} \Vert_{\infty} +  \mathfrak{U^\e} \left( \nabla_v \varphi^\eps \right) \cdot \frac{\nabla_v \varphi^\eps}{\vert \nabla_v \varphi^\eps \vert}.
\end{equation*}
Thanks to the maximum principle (H1), we deduce that there exists $C$ such that $\Vert \nabla_v \varphi^\eps \Vert_\infty \leq \Vert \nabla_v \varphi^\eps (t=0) \Vert_\infty + C t = Ct$, as we supposed that the initial data does not depend on $v$, and this proves (iv).

\end{proof}

\section{Hamilton - Jacobi dynamics - Proof of Theorem \ref{HJlimit}.}\label{HJProof}

In this Section, we present the proof of our main result, Theorem \ref{HJlimit}. We divide the proof into two parts. We first show that the structural assumptions on the operator $\mathcal{L}$ make $\varphi^\eps$ converge locally uniformly up to a subsequence towards a function independent of the velocity variable, which is the first point of Theorem \ref{HJlimit}. Then, we perform our kinetic Hamilton-Jacobi procedure to identify the limit as a solution of one of the Hamilton-Jacobi equations \eqref{standHJ} or \eqref{varHJ}.

\subsection{Convergence of $\varphi^\eps$.}

For the convenience of the reader, we enlighten the convergence property in the following 
\begin{proposition}\label{conv}
Suppose that (H0), (H1), (H2) and (H3) hold. Then, up to a subsequence, the phase $\varphi^\eps$ converges locally uniformly in $\R^+ \times \R^n \times V$ towards $\varphi^0$, which does not depend on $v$.
\end{proposition}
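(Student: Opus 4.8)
The plan is to extract a limit using the uniform Lipschitz bounds of Proposition \ref{estimate} and then use Hypothesis (H3) to show that the limit is independent of $v$. First I would invoke Proposition \ref{estimate}: since $\varphi^\eps$ is bounded (point (i)) and uniformly Lipschitz in $t$ and $x$ (points (ii) and (iii)), the family $(\varphi^\eps)_\eps$ is locally equibounded and equicontinuous in the $(t,x)$ variables on any compact set. The Arzelà-Ascoli theorem then yields a subsequence (not relabeled) converging locally uniformly in $(t,x)$, uniformly in $v$ since $V$ is bounded by (H0); call the limit $\varphi^0(t,x,v)$. A priori this limit could still depend on $v$, so the heart of the argument is to rule this out.

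The key step is to exploit Hypothesis (H3). Rewriting (H3) as
\begin{equation*}
\mathfrak{N}\left( \varphi^\eps , \nabla_v \varphi^\eps \right) \leq \e^\alpha \mathfrak{\overline U^\e}\left( \varphi^\eps \right) + \eps^\beta \left\vert \frac{ \mathcal{P} \left( M e^{- \frac{\varphi^\eps}{\eps} } \right) }{ M e^{- \frac{\varphi^\eps}{\eps} }} \right\vert,
\end{equation*}
I would argue that the right-hand side vanishes as $\eps \to 0$. The quantity $\mathcal{P}(M e^{-\varphi^\eps/\eps})/(M e^{-\varphi^\eps/\eps})$ is controlled using equation \eqref{KinEqPhi} together with the bounds on $\partial_t \varphi^\eps$, $v\cdot\nabla_x\varphi^\eps$ and $r\rho^\eps$ from Proposition \ref{estimate}: each of these is uniformly bounded, so this ratio is $O(1)$ and its contribution is $O(\eps^\beta) \to 0$. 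For the term $\e^\alpha \mathfrak{\overline U^\e}(\varphi^\eps)$, the natural route is the half-relaxed limit / viscosity machinery: passing to the limit in the inequality in the viscosity sense, the $\e^\alpha \mathfrak{\overline U^\e}$ contribution disappears (it carries a vanishing prefactor, and $\mathfrak{\overline U^\e}$ obeys the maximum principle (H1), which is exactly what lets one test it against smooth test functions touching $\varphi^0$ from above/below without producing an uncontrolled sign). Consequently the locally uniform limit $\varphi^0$ satisfies, in the viscosity sense,
\begin{equation*}
\mathfrak{N}\left( \varphi^0 , \nabla_v \varphi^0 \right) = 0 \qquad \text{in } V,
\end{equation*}
for each fixed $(t,x)$. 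By the defining property of $\mathfrak{N}$ assumed in (H3) — every viscosity solution of $\mathfrak{N}(u,\nabla_v u)=0$ is constant — we conclude that $v \mapsto \varphi^0(t,x,v)$ is constant, i.e.\ $\varphi^0$ does not depend on $v$.

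The main obstacle I anticipate is the rigorous passage to the viscosity limit in the differential inequality of (H3), and in particular handling the operator $\mathfrak{\overline U^\e}$, which may be nonlocal or second-order. One must make precise in what sense $\mathfrak{N}(\varphi^\eps,\nabla_v\varphi^\eps)\leq o(1)$ upgrades to $\mathfrak{N}(\varphi^0,\nabla_v\varphi^0)\leq 0$ in the viscosity sense for the $(t,x)$-frozen problem in $v$; the maximum principle (H1) satisfied by $\mathfrak{\overline U^\e}$ is the structural ingredient that makes this work, since it guarantees the right sign when one evaluates $\mathfrak{\overline U^\e}$ at a maximum of $\varphi^\eps$ minus a test function. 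A secondary subtlety is that the velocity-gradient bound (iv) of Proposition \ref{estimate} degenerates linearly in $t$ and so does not by itself force $v$-independence; it is genuinely (H3), not (iv), that delivers the conclusion, with (iv) serving only to guarantee enough regularity in $v$ to make the test-function comparison legitimate.
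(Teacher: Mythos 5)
Your proposal is correct and follows essentially the same route as the paper: Ascoli extraction from the uniform bounds of Proposition \ref{estimate}, a uniform bound on $\bigl\vert \mathcal{L}\bigl( M e^{-\varphi^\eps/\eps}\bigr)/\bigl(M e^{-\varphi^\eps/\eps}\bigr)\bigr\vert$ read off from equation \eqref{KinEqPhi}, passage to the limit in the (H3) inequality in the viscosity sense using the maximum principle of $\mathfrak{\overline U}^\e$, and the constancy of viscosity solutions of $\mathfrak{N}(u,\nabla_v u)=0$. The only small correction is that equicontinuity in $v$ for the Ascoli step comes from estimate (iv) (the bound $\Vert\nabla_v\varphi^\eps\Vert_\infty\leq Ct$, locally uniform in $\eps$), not merely from the boundedness of $V$ in (H0).
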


\begin{proof}[{\bf Proof of Proposition \ref{conv}}]
Given the assumptions (H0), (H1) and (H2), we deduce from Proposition \ref{estimate}, Ascoli's theorem that in all compact subsets of $\R^+ \times \R^n \times V$, we can extract from $\varphi^\eps$ a converging subsequence. The limit $\varphi^0$ is uniquely defined on the whole space after increasing extraction on compacts. 
The uniform bounds of Proposition \ref{estimate} also give that $ \left\vert \frac{\mathcal{{L}} \left( M(v) e^{ - \frac{\varphi^{\eps}}{\eps} }   \right)}{M(v) e^{ - \frac{\varphi^{\eps}}{\eps} }} \right\vert$ is uniformly bounded by a constant $C$. We thus deduce from (H3) that for all $(t,x) \in \R^+ \times \R^n$,
\begin{equation*}
\forall v \in V, \qquad \mathfrak{N}\left(\varphi^\eps, \nabla_v \varphi^\eps \right) \leq \eps^{\alpha} \mathfrak{\overline{U}^\e}\left(\varphi^\eps\right) + \eps^\beta C. 
\end{equation*}
Hence, since $\mathfrak{\overline{U}^\e}$ satisfies the maximum principle (H1), one obtains when $\eps \to 0$ that $u := \varphi^0(t,x,\cdot)$ is a viscosity solution of 
\begin{equation*}
\mathfrak{N}\left( u , \nabla_v u \right) = 0.
\end{equation*}
Recall that $\mathfrak{N}$ is positive. Thus $u$ is constant thanks to Hypothesis (H3).

\end{proof}

\subsection{Identification of the limit.}

In this Subsection, we present the viscosity procedure which identifies the viscosity limit of $\varphi^\eps$. We will follow the same steps as in the seminal paper of Evans and Souganidis \cite{Evans-Souganidis}. In addition with a relevant use of corrected tests functions, see \cite{Evans}. Indeed, the resolution of the spectral problem of Hypothesis (H4) is of main importance to define a corrector in the viscosity procedure, see \eqref{correction1} and \eqref{eqeta}.

Since we already know that $\varphi^{\eps} \geq 0$, the remaining properties to be proven to get the result of Theorem \ref{HJlimit} are gathered in the two following steps: 

\bigskip

{\bf \# Step 1: Viscosity supersolution.}

\qquad 

The statement of the supersolution property does not depend explicitely on the growth part.
 
\begin{lemma}\label{super}
Assume $r \geq 0$. Then $\varphi^0$ satisfies 
\begin{equation}\label{HJeq}
\forall (t,x) \in \R^{+*} \times \R^n, \qquad \partial_t \varphi^0 + \mathcal{H} \left( \nabla_x \varphi^0 \right) + r \geq 0.
\end{equation}
in the viscosity sense.
\end{lemma}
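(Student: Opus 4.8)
The plan is to run the perturbed test function method of Evans and Souganidis, using the principal eigenvector of the spectral problem (H4) as a velocity corrector. Fix a smooth test function $\phi$ such that $\varphi^0 - \phi$ has a strict local minimum at some $(t_0,x_0)$ with $t_0 > 0$, and set $p_0 = \nabla_x\phi(t_0,x_0)$. Since $\varphi^0$ does not depend on $v$ by Proposition \ref{conv}, the corrected test function I would use is
\[
\phi^\eps(t,x,v) = \phi(t,x) + \eps\,\eta(v), \qquad \eta(v) := \log\frac{M(v)}{Q_{p_0}(v)},
\]
where $Q_{p_0}$ is the positive eigenvector furnished by (H4) at the frozen slope $p_0$. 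By (H0) and the positivity and smoothness of $Q_{p_0}$, the corrector $\eta$ is bounded, so $\phi^\eps \to \phi$ uniformly; together with the local uniform convergence $\varphi^\eps \to \varphi^0$ this ensures that $\varphi^\eps - \phi^\eps$ attains a local minimum at a point $(t_\eps,x_\eps,v_\eps)$ with $(t_\eps,x_\eps) \to (t_0,x_0)$ and $t_\eps > 0$ for $\eps$ small. The reason for this choice is the algebraic identity $M(v)e^{-\phi^\eps/\eps} = e^{-\phi/\eps}Q_{p_0}(v)$, which combined with \eqref{eigenpb} produces exactly the effective Hamiltonian:
\[
\frac{\mathcal{L}\bigl(M e^{-\phi^\eps/\eps}\bigr)(v)}{M(v)e^{-\phi^\eps/\eps}} = \frac{\mathcal{L}(Q_{p_0})(v)}{Q_{p_0}(v)} = \mathcal{H}(p_0) - v\cdot p_0.
\]

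At the contact point one has $\partial_t\varphi^\eps = \partial_t\phi$ and $\nabla_x\varphi^\eps = \nabla_x\phi$, so evaluating \eqref{KinEqPhi} gives
\[
\partial_t\phi(t_\eps,x_\eps) + v_\eps\cdot\nabla_x\phi(t_\eps,x_\eps) + r = -\frac{\mathcal{L}(M e^{-\varphi^\eps/\eps})(v_\eps)}{M(v_\eps)e^{-\varphi^\eps/\eps}} + r\rho^\eps.
\]
To control the nonlocal term I would write $f := M e^{-\varphi^\eps/\eps} = Q\,u$ with $Q := M e^{-\phi^\eps/\eps} = e^{-\phi/\eps}Q_{p_0} \geq 0$ and $u := e^{-(\varphi^\eps-\phi^\eps)/\eps}$. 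Since $\varphi^\eps - \phi^\eps$ is minimal in $v$ at $v_\eps$ over all of $V$, the function $u$ attains its global maximum there, so the maximum principle (H1) applies to the scattering part $P$; combined with the elementary bound $\int_V Q u \leq u(v_\eps)\int_V Q$ for the reaction part, this should yield $\mathcal{L}(f)(v_\eps) \leq u(v_\eps)\,\mathcal{L}(Q)(v_\eps)$, whence
\[
\frac{\mathcal{L}(f)(v_\eps)}{f(v_\eps)} \leq \frac{\mathcal{L}(Q)(v_\eps)}{Q(v_\eps)} = \frac{\mathcal{L}(Q_{p_0})(v_\eps)}{Q_{p_0}(v_\eps)} = \mathcal{H}(p_0) - v_\eps\cdot p_0,
\]
where the $e^{-\phi/\eps}$ factor cancels between numerator and denominator.

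Inserting this lower bound for $-\mathcal{L}(f)/f$ into the equation and discarding the nonnegative term $r\rho^\eps$ gives
\[
\partial_t\phi(t_\eps,x_\eps) + \mathcal{H}(p_0) + r \geq v_\eps\cdot\bigl(p_0 - \nabla_x\phi(t_\eps,x_\eps)\bigr).
\]
Because $V$ is bounded and $\nabla_x\phi(t_\eps,x_\eps)\to p_0$, the right-hand side vanishes as $\eps\to 0$; passing to the limit and recalling $p_0 = \nabla_x\phi(t_0,x_0)$ would then produce \eqref{HJeq}. The argument is insensitive to the sign of $r$, so the case $r=0$ is covered verbatim.

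I expect the main obstacle to be the treatment of the nonlocal operator $\mathcal{L}$ at the contact point. Unlike the local PDE setting, where one merely reads off the derivatives of the test function, here both the scattering term and the reaction term must be dominated by their values on the corrector via the maximum principle (H1), which forces the careful bookkeeping between the full density $\rho^\eps=\int_V f$ and the corrector density $\int_V Q$. A second delicate point is that the corrector freezes the slope at $p_0$ inside $Q_{p_0}$ while the contact point carries the slope $\nabla_x\phi(t_\eps,x_\eps)$; this discrepancy is what generates the error term $v_\eps\cdot(p_0-\nabla_x\phi(t_\eps,x_\eps))$, and its vanishing relies on the continuity and smoothness in $p$ of $\mathcal{H}$ and $Q_p$ granted by (H4).
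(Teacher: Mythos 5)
Your proof is correct and follows the same perturbed test function strategy as the paper: the eigenvector $Q_p$ from (H4) supplies the corrector $\eta$, the contact point gives the derivatives of the test function, and the maximum principle (H1) (extended to the BGK part by the elementary bound $\int_V Qu \leq u(v_\eps)\int_V Q$) dominates the nonlocal term by its value on the corrector, exactly as in the paper's proof. The one genuine difference is that you freeze the slope in the corrector at $p_0 = \nabla_x\phi(t_0,x_0)$, whereas the paper uses the $(t,x)$-dependent corrector $\eta(t,x,v) = -\ln\left( Q_{[\nabla_x\psi^0(t,x)]}(v)/M(v) \right)$. This changes where the error term lives: in your version it is $v_\eps\cdot\bigl(p_0 - \nabla_x\phi(t_\eps,x_\eps)\bigr)$, which vanishes using only (H0) and the $\mathcal{C}^2$ regularity of the test function --- contrary to your closing remark, you do not actually need the smoothness of $p\mapsto Q_p$ anywhere, only the solvability of \eqref{eigenpb} at the single slope $p_0$; in the paper's version the error is a difference of two Rayleigh quotients $\mathcal{P}(Q_{p^\eps})/Q_{p^\eps} - \mathcal{P}(Q_{\nabla_x\psi^0(t^\eps,x^\eps)})/Q_{\nabla_x\psi^0(t^\eps,x^\eps)}$ evaluated at nearby slopes, whose vanishing does invoke the regularity of $Q_p$ in $p$ from (H4). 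Your variant is thus marginally more economical in its hypotheses for this particular lemma; both are standard and equally valid here since $V$ is compact.
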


\begin{proof}[\bf Proof of Lemma \ref{super}]
Let $\psi^0 \in \mathcal{C}^2 \left( \R^+ \times \R^n \right)$ be a test function such that $ \varphi^0 - \psi^0 $ has a strict local minimum a $(t^0,x^0)$ with $t^0 > 0$. We want to show that
\begin{equation*}
\qquad \partial_t \psi^0 (t^0,x^0) + \mathcal{H} \left( \nabla_x \psi^0 (t^0,x^0) \right) + r\geq 0.
\end{equation*}
We define the corrected test functions \cite{Evans,Crandall-Evans} by 
\begin{equation}\label{correction1}
\forall (t,x,v) \in \R^{+*} \times \R^n \times V, \qquad \psi^{\eps}(t,x,v) := \psi^{0}(t,x) + \eps \eta(t,x,v), 
\end{equation}
with a correcting term $\eta$ that comes after Hypothesis (H4). Indeed, we set: 
\begin{equation}\label{eqeta}
\forall (t,x,v) \in \R^+ \times \R^n \times V, \qquad \eta(t,x,v) = - \ln \left( \frac{ Q_{\left[\nabla_x \psi^{0}(t,x)\right]}(v) }{M(v)} \right).
\end{equation}


The definition of the correcting function gives that $\varphi^{\eps} - \psi^{\eps}$ converges locally uniformly towards $\varphi^0 - \psi^0$. As a consequence, there exists a sequence $(t^{\eps},x^{\eps}) \in \R^{+*} \times \R^n$ of strict local minima in $(t,x)$ which converges towards $(t^0,x^0)$ and a sequence $v^\eps \in V$ such that $(t^{\eps},x^{\eps},v^\eps)$ minimizes $\varphi^{\eps} - \psi^{\eps}$. At the point $(t^\eps,x^{\eps},v^{\eps})$, using the spectral problem of (H4) with $p^{\eps}= \nabla_x \psi^{\eps} (t^\eps,x^{\eps},v^{\eps})$, one obtains:
\begin{equation*}
\partial_t \psi^{\eps}  + \mathcal{H} \left( p^{\eps}  \right) + r  \; = \;  \partial_t \psi^{\eps}  + v^{\eps} \cdot p^{\eps} + \frac{\mathcal{L}\left(Q_{p^{\eps}} \right)(v^{\eps})}{Q_{p^{\eps}} }+ r.
\end{equation*} 
We notice that at the point $(t^{\eps},x^{\eps},v^{\eps})$, the following holds 
\begin{equation*}
\partial_t \varphi^{\eps} = \partial_t \psi^{\eps}, \qquad \nabla_x \varphi^{\eps} = \nabla_x \psi^{\eps} = p^{\eps}.
\end{equation*}
Thus,
\begin{equation*}
\begin{array}{lcl}
\partial_t \psi^{\eps}  + \mathcal{H}\left( p^{\eps} \right) + r  & = & \partial_t \varphi^{\eps} + v^{\eps} \cdot \nabla_x \varphi^{\eps}  + r + \frac{\mathcal{L}\left(Q_{p^{\eps} }  \right)(v^{\eps})}{Q_{p^{\eps}} },\\
\\
& = & \frac{\mathcal{L}\left(Q_{p^{\eps} } \right)(v^{\eps})}{Q_{p^{\eps}}   } - \frac{\mathcal{L} \left( Me^{ - \frac{\varphi^{\eps}(t^\eps,x^{\eps},\cdot)}{\eps} }   \right)(v^{\eps})}{M e^{ - \frac{\varphi^{\eps}}{\eps} }}  + r \rho^{\eps}(t^\eps,x^{\eps}),\\
\end{array}
\end{equation*}
recalling \eqref{KinEqPhi}. Recall that potential boundary conditions are included in the formulation of the operators. Simplifying the latter and using $\rho^{\eps} \geq 0$, we obtain at the point $(t^{\eps},x^{\eps},v^{\eps})$:
\begin{equation*}
\begin{array}{lcl}
\partial_t \psi^{\eps}  + \mathcal{H}\left( \nabla_x \psi^{\eps}  \right) + r   & \geq & \frac{\mathcal{P}\left(Q_{p^{\eps}}  \right)(v^{\eps})}{Q_{p^{\eps} }   } - \frac{ \mathcal{P} \left( M e^{ - \frac{\varphi^{\eps}(t^\eps,x^{\eps},\cdot)}{\eps} }   \right)(v^{\eps})}{     M e^{ - \frac{\varphi^{\eps}}{\eps} } }.\\
\end{array}
\end{equation*}

But, from the minimal character of $(t^{\eps},x^{\eps},v^{\eps})$ and the maximum principle satisfied by $\mathcal{P}$ we deduce that the following holds at the point $(t^{\eps},x^{\eps},v^{\eps})$:

\begin{equation*}
\begin{array}{lcl}
- \frac{ \mathcal{P} \left( M(\cdot) e^{ - \frac{\varphi^{\eps}(t^\eps,x^{\eps},\cdot)}{\eps} }  \right)(v^{\eps})}{     M e^{ - \frac{\varphi^{\eps}}{\eps} }  }
& = &  - \frac{ \mathcal{P} \left( M e^{  - \frac{  \varphi^{\eps} - \psi^{\eps}   }{\eps} (t^\eps,x^{\eps}, \cdot)}   e^{- \frac{\psi^{0}(t^\eps,x^{\eps})}{\eps} } e^{- \eta(t^\eps,x^{\eps},\cdot) }  \right)(v^{\eps})}{ M e^{- \frac{\varphi^{\eps}  - \psi^{\eps} }{\eps}} e^{-  \frac{\psi^{0}(t^\eps,x^{\eps})}{\eps} } e^{- \eta } },\medskip \\
& = &  - \frac{ \mathcal{P} \left( M e^{- \eta(t^\eps,x^{\eps},\cdot) }  e^{  - \frac{  \varphi^{\eps} - \psi^{\eps}   }{\eps} (t^\eps,x^{\eps}, \cdot)}    \right)(v^{\eps})}{ M e^{- \eta } e^{- \frac{\varphi^{\eps}  - \psi^{\eps} }{\eps}}  },\medskip \\
& = &  - \frac{ e^{- \frac{\varphi^{\eps}  - \psi^{\eps} }{\eps} (t^\e, x^\e,v^\e)}  \mathcal{P} \left( M e^{- \eta(t^\eps,x^{\eps},\cdot) }     \right)(v^{\eps})}{ M e^{- \eta } e^{- \frac{\varphi^{\eps}  - \psi^{\eps} }{\eps}}  },\medskip \\
& \geq & - \frac{ \mathcal{P} \left( M   e^{-  \eta(t^\eps,x^{\eps},\cdot) }  \right)(v^{\eps})}{ M  e^{-  \eta }  } .
\end{array}
\end{equation*}
One deduces, at the point $(t^\eps,x^{\eps},v^{\eps})$: 
\begin{equation*}
\partial_t \psi^{\eps}  + \mathcal{H} \left( \nabla_x \psi^{\eps}  \right) + r \; \geq \; \frac{\mathcal{P}\left(Q_{\nabla_x \psi^{\eps} (t^\eps,x^{\eps},v^{\eps}) } \right)(v^{\eps})}{Q_{\left[\nabla_x \psi^{\eps} (t^\eps,x^{\eps},v^{\eps}) \right]}   } - \frac{ \mathcal{P}\left( Me^{-  \eta(t^\eps,x^{\eps},\cdot) }  \right)(v^{\eps})}{ M e^{-  \eta }  } 
\end{equation*}
Here comes the specification of the corrector $\eta$. We obtain, at the point $(t^\eps,x^{\eps},v^{\eps})$:
\begin{equation*}
\partial_t \psi^{\eps}  + \mathcal{H} \left( \nabla_x \psi^{\eps}  \right) + r  \geq \frac{\mathcal{P}\left(Q_{\nabla_x \psi^{\eps} (t^\eps,x^{\eps},v^{\eps}) } \right)(v^{\eps})}{Q_{\left[ \nabla_x \psi^{\eps} (t^\eps,x^{\eps},v^{\eps}) \right]}  } - \frac{\mathcal{P}\left(Q_{\nabla_x \psi^{0} (t^\eps,x^{\eps}) }  \right)(v^{\eps})}{Q_{\left[ \nabla_x \psi^{0} (t^\eps,x^{\eps})\right]}  } .
\end{equation*}
As the sequence $v^\eps$ is \textit{bounded} by (H0), passing to the limit $ \eps \to 0 $ thanks to the local uniform convergence yields 
\begin{equation*}
 \partial_t \psi^{0} (t^0,x^0)+ \mathcal{H}\left( \nabla_x \psi^{0} (t^0,x^0) \right)  + r \geq 0.
\end{equation*}

\end{proof}

{\bf \# Step 2: Viscosity Subsolution.} 

\qquad

Here comes a slight distinction between the cases $r > 0$ and $r =0$. Indeed, one gets less information (but enough) when the nonlinearity is present since the limit equation is an obstacle problem \eqref{varHJ}, similarly to \cite{Evans-Souganidis}.

\begin{lemma}\label{sub}
Suppose that $r>0$. On $ \left\lbrace \varphi^0 > 0 \right\rbrace \cap \left( \R^{+*} \times \R^n \right)$, the function $ \varphi^0 $ solves the following equation in the viscosity sense : 
\begin{equation*}
\forall (t,x) \in \left\lbrace \varphi^0 > 0 \right\rbrace \cap \left( \R^{+*} \times \R^n \right), \qquad \partial_t \varphi^0 + \mathcal{H}  \left( \nabla_x \varphi^0 \right) + r \leq 0.
\end{equation*}
In the case $r=0$, this same subsolution property holds in the full space $\R^{+*} \times \R^n$. 
\end{lemma}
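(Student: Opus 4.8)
The plan is to run the same perturbed-test-function scheme as in Lemma \ref{super}, with every extremum and every inequality reversed, and to use the strict positivity of $\varphi^0$ to annihilate the growth term. First I would take a test function $\psi^0 \in \mathcal{C}^2(\R^+ \times \R^n)$ such that $\varphi^0 - \psi^0$ has a strict local \emph{maximum} at $(t^0,x^0)$ with $t^0 > 0$, and, when $r>0$, with $\varphi^0(t^0,x^0) > 0$. Keeping the corrected test function $\psi^\eps = \psi^0 + \eps\eta$ with $\eta$ given by \eqref{eqeta}, the local uniform convergence of $\varphi^\eps - \psi^\eps$ towards $\varphi^0 - \psi^0$ produces a sequence of local maxima $(t^\eps, x^\eps, v^\eps)$ of $\varphi^\eps - \psi^\eps$ with $(t^\eps,x^\eps) \to (t^0,x^0)$ and $v^\eps \in V$ bounded by (H0).

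At such a maximum one still has $\partial_t \varphi^\eps = \partial_t \psi^\eps$ and $\nabla_x \varphi^\eps = \nabla_x \psi^\eps = p^\eps$, so the algebra of Lemma \ref{super}, using the spectral problem (H4) with $p^\eps = \nabla_x\psi^\eps(t^\eps,x^\eps,v^\eps)$, equation \eqref{KinEqPhi}, and the cancellation of $\overline\Sigma$ in the difference of the $\mathcal{L}/\cdot$ terms, leads to the \emph{exact identity}
\[
\partial_t \psi^\eps + \mathcal{H}(p^\eps) + r = \frac{\mathcal{P}(Q_{p^\eps})(v^\eps)}{Q_{p^\eps}} - \frac{\mathcal{P}\left(M e^{-\frac{\varphi^\eps}{\eps}}\right)(v^\eps)}{M e^{-\frac{\varphi^\eps}{\eps}}} + r\rho^\eps(t^\eps,x^\eps),
\]
all quantities being evaluated at $(t^\eps,x^\eps,v^\eps)$.

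The one genuine change — and the heart of the matter — is the direction in which the maximum principle is used. Here $\varphi^\eps - \psi^\eps$ attains a maximum at $v^\eps$ in velocity, so $u := e^{-(\varphi^\eps - \psi^\eps)/\eps}$ has a \emph{minimum} there; equivalently $-u$ attains a maximum at $v^\eps$. Applying Hypothesis (H1) to $-u$ with the nonnegative weight $Q = M e^{-\eta(t^\eps,x^\eps,\cdot)} = Q_{[\nabla_x\psi^0(t^\eps,x^\eps)]}$ yields the reversed inequality $\mathcal{P}(Qu)(v^\eps) \geq \mathcal{P}(Q)(v^\eps)\, u(v^\eps)$, whence
\[
- \frac{\mathcal{P}\left(M e^{-\frac{\varphi^\eps}{\eps}}\right)(v^\eps)}{M e^{-\frac{\varphi^\eps}{\eps}}} \leq - \frac{\mathcal{P}\left(Q_{[\nabla_x\psi^0(t^\eps,x^\eps)]}\right)(v^\eps)}{Q_{[\nabla_x\psi^0(t^\eps,x^\eps)]}}.
\]
This is the exact mirror of the corresponding step in Lemma \ref{super}, and I expect it to be the main obstacle: one must notice that (H1), being an assertion about a maximum of a real-valued function, applies equally to $-u$ and therefore delivers the opposite inequality at a velocity minimum (this is readily checked on both the kernel and the Laplacian operators of the running examples).

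It then remains to pass to the limit. The two corrector terms combine into $\frac{\mathcal{P}(Q_{p^\eps})(v^\eps)}{Q_{p^\eps}} - \frac{\mathcal{P}(Q_{[\nabla_x\psi^0(t^\eps,x^\eps)]})(v^\eps)}{Q_{[\nabla_x\psi^0(t^\eps,x^\eps)]}}$, which tends to $0$ since both $p^\eps$ and $\nabla_x\psi^0(t^\eps,x^\eps)$ converge to $\nabla_x\psi^0(t^0,x^0)$ and $p \mapsto Q_p$ is smooth by (H4). For the growth term I would use positivity: since $\varphi^0(t^0,x^0) > 0$, $\varphi^0$ is continuous and $v$-independent, so local uniform convergence gives $\varphi^\eps(t,x,v) \geq c > 0$ for $(t,x)$ near $(t^0,x^0)$, all $v \in V$ and $\eps$ small, whence $\rho^\eps(t^\eps,x^\eps) = \int_V M e^{-\frac{\varphi^\eps}{\eps}}\,dv \leq e^{-c/\eps} \to 0$. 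Passing to the limit $\eps \to 0$ in the identity above, bounded from above by the reversed-principle inequality, then gives $\partial_t\psi^0(t^0,x^0) + \mathcal{H}(\nabla_x\psi^0(t^0,x^0)) + r \leq 0$. When $r = 0$ the growth term is simply absent, no positivity is needed, and the subsolution property holds on all of $\R^{+*}\times\R^n$; this is exactly why the restriction to the set $\{\varphi^0 > 0\}$ appears only in the reaction case.
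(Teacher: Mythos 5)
Your proposal is correct and follows essentially the same route as the paper, which simply invokes "the same steps as for Lemma \ref{super}" to arrive at inequality \eqref{subfinal} and then kills the term $r\rho^\eps(t^\eps,x^\eps)$ on $\{\varphi^0>0\}$ exactly as you do. The one step you elaborate beyond the paper — that at a velocity minimum of $u = e^{-(\varphi^\eps-\psi^\eps)/\eps}$ the maximum principle (H1), applied to $-u$ via linearity of $\mathcal{P}$, yields the reversed inequality — is precisely the implicit content of the paper's shortcut, and your verification of it is sound.
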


\begin{proof}[\bf Proof of Lemma \ref{sub}]
Let $\psi^0 \in \mathcal{C}^2 \left( \R^{+*} \times \R^n \right)$ be a test function such that $ \varphi^0 - \psi^0 $ has a local maximum a $(t^0,x^0)$. We want to show that
\begin{equation*}
\qquad \partial_t \psi^0 (t^0,x^0) + \mathcal{H} \left( \nabla_x \psi^0 (t^0,x^0) \right) + r \leq 0.
\end{equation*}
Following the same steps as for Lemma \ref{super}, there exists $(t^{\eps},x^{\eps},v^{\eps}) \in \R^{+*} \times \R^n \times V$ with $(t^{\eps},x^{\eps}) \to (t^0,x^0)$ and a bounded sequence $v^{\eps}$ such that at the point $(t^{\eps},x^{\eps},v^{\eps})$:

\begin{equation}\label{subfinal}
\partial_t \psi^{\eps}  + \mathcal{H} \left( \nabla_x \psi^{\eps}  \right) + r  \leq \frac{\mathcal{P}\left(Q_{\left[ \nabla_x \psi^{\eps} (t^\eps,x^{\eps},v^{\eps}) \right]}  \right)(v^{\eps})}{Q_{\left[ \nabla_x \psi^{\eps} (t^\eps,x^{\eps},v^{\eps}) \right]} (v^\e) } - \frac{\mathcal{P}\left(Q_{\nabla_x \psi^{0} (t^\eps,x^{\eps}) } \right)(v^{\eps})}{Q_{\left[ \nabla_x \psi^{0} (t^\eps,x^{\eps})\right]}(v^\e)  } + r \rho^\eps(t^{\eps},x^{\eps})
\end{equation}

If $r=0$, one obtains directly with the uniform convergences that $\varphi^0$ is a subsolution of $\partial_t u  + \mathcal{H} \left( \nabla_x u  \right) = 0$ in $(t_0,x_0)$, as for Lemma \ref{super}.

We now come to the case $r >0$. Suppose now that $\varphi^{0}(t^0,x^0) > 0$, we have by the uniform convergence of $\varphi^\eps$ (up to extraction) that for sufficiently small $\e$, $\forall v \in V, \varphi^\eps( t^\e , x^\e ,v ) > 0$. The Lebesgue dominated convergence theorem gives
\begin{equation*}
\lim_{\eps \to 0} \rho^{\eps}(t^\eps , x^{\eps}) = \lim_{\eps \to 0} \int_{V} M(v) e^{ - \frac{\varphi^{\eps} (t^{\eps},x^{\eps},v) }{\eps} } dv = 0.
\end{equation*}
As a consequence, passing to the limit $\eps \to 0$ in \eqref{subfinal} yields 
\begin{equation*}
 \partial_t \psi^{0} (t^0,x^0)+ \mathcal{H} \left( \nabla_x \psi^{0} (t^0,x^0) \right)  + r \leq 0,
\end{equation*}
and the Lemma \ref{sub} is proved.
\end{proof}

\subsection{Uniqueness of the viscosity solution.}

Before referring to an uniqueness property for \eqref{standHJ} and \eqref{varHJ}, we have to check the initial conditions in the viscosity sense. We perform the proof in the variational case $(r > 0)$, the other one is similar. One has to check, in the viscosity sense
\begin{equation}\label{inicond1}
\min\left( \min\left \lbrace \partial_t \varphi^0 + \mathcal{H} \left(\nabla_x \varphi^0 \right) + r , \varphi^0 \right\rbrace , \varphi^0 - \varphi_0 \right) \leq 0, \qquad \textrm{in } \lbrace t = 0 \rbrace \times \R^n,
\end{equation}
and
\begin{equation}\label{inicond2}
\max\left( \min\left \lbrace \partial_t \varphi^0 + \mathcal{H} \left(\nabla_x \varphi^0 \right) + r , \varphi^0 \right\rbrace , \varphi^0 - \varphi_0 \right) \geq 0, \qquad \textrm{in } \lbrace t = 0 \rbrace \times \R^n.
\end{equation}
Since \eqref{inicond2} can be derived on the same model, we compute \eqref{inicond1} only. Let $\psi^0 \in \mathcal{C}^2\left( \R^+ \times \R \right)$ be a test function such that $\varphi^0 - \psi^0$ has a strict local maximum in $(0,x_0)$. We have to prove that either
\begin{equation*}
\varphi^0(0,x_0) - \varphi_0 (x_0)\leq 0,
\end{equation*}
or if $\varphi^0(0,x_0) > 0$, then
\begin{equation*}
\partial_t \psi^0(0,x_0) + \mathcal{H}  \left( \nabla_x \psi^0 (0,x_0)\right) + r \leq 0.
\end{equation*}
Suppose then that 
\begin{equation}\label{hyp}
\varphi^0(0,x_0) > \max \left( \varphi_0 (x_0) , 0 \right).
\end{equation}
Using the same arguments as in {\bf \# Step 2} above, we have a sequence $(t_\e,x_\e)$ which tends to $(0,x_0)$ as $\e\to 0$ and a converging sequence $v_\e$ such that $(t_\e,x_\e,v_\e)$ maximizes $\varphi^\e - \psi^\e$. The key point to be noticed is that there exists a sequence $\eps_n \to 0$ and a subsequence $(t_{\e_n},x_{\e_n},v_{\e_n})$ of $(t_\e,x_\e,v_\e)$ such that $t_{\e_n} > 0$.

Indeed, suppose $t_\e=0$ when $\eps$ is sufficiently small. Then for all $(t,x,v)$ in some neighborhood of $(0,x_\e,v_\e)$, one has 
\begin{equation*}
\varphi^\eps \left( 0 , x_\e , v_\e \right) - \psi^\eps \left( 0 , x_\e , v_\e \right) \geq \varphi^\eps \left( t , x , v \right) - \psi^\e \left( t , x , v \right). 
\end{equation*}
Passing to the limit $\eps \to 0$ thanks to the local uniform convergence and setting $(t,x) = (0,x_0)$, we get 
\begin{equation*}
\varphi_0(x_0) - \psi^0 \left( 0 , x_0 \right) \geq \varphi^0(0,x_0) - \psi^0(0,x_0), 
\end{equation*}
and this contradicts \eqref{hyp}. The conclusion is then similar as in {\bf \# Step 2} above since along $(t_{\e_n},x_{\e_n},v_{\e_n})$, Equation \eqref{subfinal} holds.


From Section \ref{EvPb}, the Hamiltonian $\mathcal{H}$ is a Lipschitz function of $p$. As a consequence, we know from \cite{Evans-book,Evans-Souganidis} that there exists a unique viscosity solution of \eqref{standHJ} and \eqref{varHJ}. It yields that all the sequence $\varphi^\eps$ converges locally uniformly to $\varphi^0$.
 
\section{The eigenvalue problem (H4).}\label{EvPb}

In this Section, we discuss the spectral problem of Hypothesis (H4). Existence basically relies on compactness, positivity, and the Krein-Rutman theory. As a complement, we also provide some qualitative properties of the resulting Hamiltonian. In the next Proposition, we treat the case when $P$ is compact and strongly positive. This is natural for kernel operators.

\begin{proposition}\label{Pcompact}
Let $V$ be a bounded velocity domain. Suppose that $P : \mathcal{C}^0(V) \mapsto \mathcal{C}^0(V)$ is a linear, compact, and strongly positive operator.
Moreover, if $r=0$ we require that there exists a constant $c$ such that $P(f) \geq c M(v) \int_V f dv$. Then the spectral problem of Hypothesis (H4) has a solution.
\end{proposition}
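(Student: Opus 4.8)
The plan is to recast \eqref{eigenpb} as a principal-eigenvalue problem for a compact, strongly positive operator, invoke the Krein--Rutman theorem, and close the argument with a scalar intermediate-value step. Fix $p\in\R^n$. Unfolding $\mathcal{L}(f)=P(f)-\Sigma f+r\bigl(M\int_V f-f\bigr)$ and moving every multiplication term to the right-hand side, I rewrite \eqref{eigenpb} as
\[
A_p(Q_p)(v)=\bigl(\mathcal{H}(p)+m_p(v)\bigr)Q_p(v),\qquad A_p(f):=P(f)+rM(v)\int_V f\,dv,
\]
with $m_p(v):=\Sigma(v)+r-v\cdot p$. Since $\Sigma\in W^{1,\infty}(V)$ and $V$ is bounded by (H0), $m_p$ is bounded, so $-\gamma_p:=\min_{\overline V}m_p$ is finite.

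The operator $A_p$ is compact, being the sum of the compact operator $P$ and a rank-one operator, and it is strongly positive on $\mathcal{C}^0(V)$: for $r>0$ the rank-one term $f\mapsto M\int_V f$ already forces $A_p(f)$ to be a strictly positive multiple of $M$ for every nonnegative $f\not\equiv 0$, while for $r=0$ this role is played by the assumed lower bound $P(f)\ge cM\int_V f$. Because $\overline V$ is compact, the positive cone of $\mathcal{C}^0(V)$ has nonempty interior, so Krein--Rutman applies. For each $\lambda>\gamma_p$ the weight $\lambda+m_p$ is positive and bounded away from $0$ on $V$, whence
\[
K_{p,\lambda}(f):=\frac{A_p(f)}{\lambda+m_p}
\]
is again compact and strongly positive; it therefore admits a simple principal eigenvalue $\mu(\lambda)>0$, equal to its spectral radius, with a unique positive normalized eigenvector. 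The reformulated problem is exactly the equation $\mu(\lambda)=1$, since $K_{p,\lambda}(Q)=\mu(\lambda)Q$ is equivalent to $A_p(Q)=\mu(\lambda)(\lambda+m_p)Q$.

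It remains to solve $\mu(\lambda)=1$. I would first check that $\lambda\mapsto\mu(\lambda)$ is continuous and strictly decreasing: for $\lambda_1<\lambda_2$ one has $K_{p,\lambda_1}\ge K_{p,\lambda_2}$ pointwise on the positive cone, and monotonicity of the spectral radius together with strong positivity makes the inequality $\mu(\lambda_1)>\mu(\lambda_2)$ strict. Next, $\|K_{p,\lambda}\|\to 0$ as $\lambda\to+\infty$ gives $\mu(\lambda)\to 0$, while the lower bound $A_p(f)\ge c'M\int_V f$ (furnished by the $rM\int_V f$ term when $r>0$ and by the standing hypothesis when $r=0$) forces $\mu(\lambda)>1$ for $\lambda$ close to $\gamma_p$, where the weight degenerates. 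The intermediate-value theorem then yields a unique $\lambda^\ast$ with $\mu(\lambda^\ast)=1$; I set $\mathcal{H}(p)=\lambda^\ast$ and take $Q_p$ to be the associated normalized positive eigenvector, which lies in $L^1(V)$ since it is continuous on $\overline V$. Uniqueness of $\mathcal{H}(p)$ follows from strict monotonicity of $\mu$, and simplicity of the Krein--Rutman eigenvalue gives uniqueness of $Q_p$. Finally, $A_p$ and $m_p$ depend smoothly (indeed affinely, through $v\cdot p$) on $p$, and the principal eigenvalue is algebraically simple and isolated, so analytic perturbation theory, or the implicit function theorem applied to $\mu(\lambda,p)=1$, shows that $\mathcal{H}$ and $Q_p$ are smooth in $p$.

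The main obstacle is the lower-end behaviour of $\mu$, namely guaranteeing that $\mu(\lambda)$ exceeds $1$ as $\lambda\downarrow\gamma_p$ so that a crossing of the level $1$ genuinely occurs; this is precisely the coercivity encoded by the strong positivity of $P$ and, when $r=0$, by the extra assumption $P(f)\ge cM\int_V f$, and it must be combined with a careful verification that $A_p$ is strongly positive and that the cone has nonempty interior so that Krein--Rutman is legitimately applicable. The remaining continuity and monotonicity statements for $\mu$, and the perturbation-theoretic smoothness in $p$, are standard once this coercivity is in hand.
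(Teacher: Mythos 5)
Your proof follows essentially the same route as the paper's: the same reformulation of \eqref{eigenpb} as a fixed point of the weighted operator $\Phi\mapsto \mathcal{P}(\Phi)/\mathcal{A}_\lambda$, the Krein--Rutman theorem for its principal eigenvalue $\mu(\lambda)$, and a monotonicity-plus-endpoint-limits argument to solve $\mu(\lambda)=1$, with the same use of the coercivity bound $\mathcal{P}(f)\geq \omega M\int_V f$ (with $\omega=r$ or $\omega=c$) to force blow-up at the lower endpoint. The only notable variation is the monotonicity step: you derive strict decrease of $\mu$ from order-monotonicity of the spectral radius of strongly positive operators, whereas the paper differentiates the eigenvalue relation in $\lambda$ and pairs with the adjoint eigenfunction; both are legitimate, and your endpoint analysis matches the paper's norm-decay and Fatou-lemma arguments.
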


\begin{proof}[\bf Proof of Proposition \ref{Pcompact}]

Let us first recall and define
\begin{equation*}
\mathcal{P}(f) = P(f) + r M(v) \int_V f(v) dv, \qquad  \overline \Sigma = \Sigma + r. 
\end{equation*}
Note that since $V$ is bounded, $\mathcal{P}$ is also a compact operator. For all $p\in \R^n$, we are seeking $\mathcal{H}(p)$ such that there exists a positive function $Q \in \mathcal{C}^0(V)$ such that: 
\begin{equation}\label{rel1}
\forall v \in V, \qquad \mathcal{P}(Q)(v) = \left(  \bar \Sigma(v) + \mathcal{H}(p) - v \cdot p \right) Q(v).
\end{equation}
As in similar problems \cite{Perthame, Henkel}, we will use the Krein-Rutman Theorem \cite{Krein}. To make it appear, we denote $\mathcal{A}_\lambda(v) := \bar \Sigma(v) +  \lambda  - v \cdot p$. Note that since $V$ is bounded, one can guaranty the positivity of $\mathcal{A}_\lambda$ for all $\lambda > \lambda^* := \sup_{v \in V} \left( v \cdot p - \Sigma(v) \right)$. We now consider the following operator $T$:
\begin{equation*}
\forall \Phi \in \mathcal{C}^0(V), \qquad \forall v \in V, \qquad T( \Phi ) (v) = \frac{\mathcal{P} \left( \Phi \right) (v)}{\mathcal{A}_\lambda(v)}. 
\end{equation*}
Then, the relation \eqref{rel1} writes :
\begin{equation}\label{eq:T}
\forall v \in V, \qquad T( \Phi )(v) = \Phi (v).
\end{equation}
To solve this eigenvalue problem, we are now ready to apply the Krein-Rutman Theorem \cite{Krein}. Indeed, $T$ is also a strongly positive compact operator. We work on the total cone of positive continuous functions $K =  \mathcal{C}_{+}^{0} \left( V \right)$ to find $\Phi_\lambda \in K$ which solves:
\begin{equation*}
\forall v \in V, \qquad T(\Phi_\lambda)(v) = \mu_\lambda \Phi_\lambda(v),
\end{equation*}
where $\mu_\lambda$ is thus the principal eigenvalue of the operator $T$. We assume w.l.o.g. that $\int_{V} \Phi_{\lambda}(v') dv' = 1$. 

We can do the same for the adjoint operator of $T$, which is given by 
\begin{equation*}
\forall \Psi \in \mathcal{C}^0(V), \qquad T^{*}( \Psi ) = \mathcal{P}^{*} \left( \frac{\Psi}{\mathcal{A}_{\lambda}} \right).
\end{equation*}
From the same reasons as before for the direct problem, we can solve this latter eigenvalue problem to have both 
\begin{equation*}
T( \Phi_{\lambda} ) = \mu_\lambda \Phi_{\lambda}, \qquad T^{*}( \Psi_{\lambda} )= \mu_\lambda \Psi_{\lambda},
\end{equation*}
and the normalization $\left\langle \Psi_{\lambda} \vert \Phi_{\lambda} \right\rangle = \int_V \Psi_{\lambda} (v)  \Phi_{\lambda} (v) \; dv = 1$.

We will now prove that for all $p \in \R^n$, there exists only one $\lambda := \mathcal{H}(p)$ such that $\mu_\lambda = 1 $. For this purpose, we study the function $\mu : \lambda \mapsto \mu_\lambda$ on the set $\left] \lambda^* , + \infty \right[$.

First, let us prove that $\mu$ is decreasing.
To prove this point, we use the adjoint eigenvalue problem, see \cite[Chapter 4]{Perthame} for another example of utilization in the study of size-structured models via the relative entropy method. 
Differentiating the first one with respect to $\lambda$, and taking the duality product with $\Psi_{\lambda}$ on the left, we obtain
\begin{equation*}
\left\langle \Psi_{\lambda} \Big\vert \frac{d T }{d \lambda} ( \Phi_{\lambda} ) \right\rangle + \left\langle \Psi_{\lambda}\Big\vert T\left( \frac{d \Phi_{\lambda} }{d \lambda} \right) \right\rangle = \frac{d \mu_\lambda}{d \lambda}  \left\langle \Psi_{\lambda} \Big\vert \Phi_{\lambda}  \right\rangle + \mu_\lambda \left\langle \Psi_{\lambda} \Big\vert \frac{d \Phi_{\lambda} }{d \lambda}  \right\rangle,
\end{equation*}
from what we deduce, using $\left\langle \Psi_{\lambda} \Big\vert T\left( \frac{d \Phi_{\lambda} }{d \lambda} \right) \right\rangle =  \left\langle T^{*} \left( \Psi_{\lambda} \right) \Big\vert \frac{d \Phi_{\lambda} }{d \lambda} \right\rangle = \mu_\lambda \left\langle \Psi_{\lambda} \Big\vert \frac{d \Phi_{\lambda}}{d \lambda}  \right\rangle $ and recalling the normalization of $\Psi_\lambda$,  
\begin{equation*}
\frac{d \mu_\lambda}{d \lambda} = \left\langle \Psi_{\lambda} \Big\vert \frac{d T}{d \lambda} \left( \Phi_{\lambda} \right) \right\rangle.
\end{equation*}
As a consequence, as 
\begin{equation*}
\forall \Phi \in \mathcal{C}^0(V), \qquad \frac{d T}{d \lambda} ( \Phi )= -  \mathcal{P} \left( \frac{\Phi}{\left( \mathcal{A}_\lambda \right)^2} \right) 
\end{equation*}
is a negative operator, we deduce that $\mu$ is decreasing. 

We now focus on the limits of $\mu$ towards the boundary of $\left] \lambda^* , + \infty \right[$. From equation \eqref{eq:T}, we deduce
\begin{equation*}
\int_{V} \frac{ \mathcal{P}( \Phi_{\lambda})(v') }{\mathcal{A_\lambda}(v')} dv' = \mu_\lambda.
\end{equation*}
We have $\left\Vert \left(\mathcal{A}(v) \right)^{-1} \right\Vert_\infty \underset{\lambda \to \infty}{\longrightarrow} 0$, so that necessarily $\lim_{\lambda \to + \infty} \mu_\lambda = 0$.

Using Fatou's lemma, we get, with $\omega = r$ if $r >0$, $\omega = c$ else: 
\begin{multline*}
+ \infty = \int_{V} \text{lim inf}_{\lambda \to \lambda^*} \left( \frac{\omega M(v')}{\mathcal{A}_\lambda(v')} \right) {dv'} \leq \displaystyle \int_{V} \text{lim inf}_{\lambda \to \lambda^*} \left( \frac{ \mathcal{P}\left( \Phi_{\lambda} \right)(v') }{\mathcal{A}_\lambda(v')} \right) {dv'} \\\leq \text{lim inf}_{\lambda \to \lambda^*} \left(  \int_{V}  \frac{ \mathcal{P}( \Phi_{\lambda} )(v') }{\mathcal{A}_\lambda(v')} {dv'} \right) = \displaystyle \text{lim inf}_{\lambda \to \lambda^*} \mu_\lambda.
\end{multline*}
Finally, we obtain the existence and uniqueness of $\mathcal{H}(p)$ for all $p \in \R^n$. One associated eigenvector is given by $Q_p = \Phi_{\mathcal{H}(p)}$.

\end{proof}

\begin{remark}
With a supplementary regularization argument, the proof can be adapted replacing $\mathcal{C}^0(V)$ by $L^1(V)$. The assumption concerning the existence of a coercivity constant $c$ when $r=0$ may be relaxed in some particular cases. These technical points are not our purpose here, so we do not address these issues further.
\end{remark}
\begin{example}

Proposition \ref{Pcompact} (and its extension to $L^1(V)$) solves the case of kernel integral operators if one assume some supplementary hypothesis on the positive kernel $K$ which ensures the compactness of the operator $\mathcal{P}$. As an example 
assuming 
\begin{equation*}
\int_{V} \sup_{v' \in V} \left( K(v,v') \right) dv < + \infty, 
\end{equation*}
we ensure the compactness of $\mathcal{P}$, see \cite{Degond}. 


In the particular case where $L$ is a BGK operator given by $L(f) := M(v) \left( \int_V f(v') dv' \right) - f$, the kernel of $\mathcal{L}$ is $K(v,v') := (1+r) M(v)$. The compactness holds. Using the scaling property of Proposition \ref{propertiesH} below with $V = [-1;1]$ and $n=1$, and the Hamiltonian derived in the one-dimensional case \cite{Bouin-Calvez}, one could find 
\begin{equation*}
\forall p \in \R^n, \qquad \mathcal{H} (p) = \frac{p}{\tanh \left(\frac{p}{1+r}\right)} - (1+r),
\end{equation*}
We can also notice that in this case, the eigenfunctions are explicit up to the knowledge of the eigenvalue. We have $ \mu_\lambda = \int_{V} \frac{M(v)}{1 - \lambda - v \cdot p} dv $, so that $\mu_\lambda = 1$ gives the dispersion relation found in \cite{Bouin-Calvez}:
\begin{equation*}
\int_{V} \frac{M(v)}{1 - \lambda - v \cdot p} dv  = 1.
\end{equation*}
The associated eigenvectors are:
\begin{equation*}
Q_{p}(v) = \frac{M(v)}{1 + \mathcal{H}(p) - v \cdot p}, \qquad W_{p}(v) = \frac{1}{1 + \mathcal{H}(p) - v \cdot p} \cdot \left( \int_{V} \frac{M(v)}{(1 + \mathcal{H}(p) - v \cdot p)^2} \text{dv} \right)^{-1}
\end{equation*}
where the latter solves the adjoint problem.


\end{example}
We now prove a similar result in the case of an elliptic operator in a bounded domain.

\begin{proposition}\label{PMcompact}
Let $V$ be a bounded smooth domain and $D(v)$ is a uniformly positive definite diffusivity matrix. Suppose $P(f) := \nabla_v \left( D(v) \nabla_v f \right)$, with Neumann boundary conditions on $\partial V$. Then the eigenvalue problem \eqref{eigenpb} has a solution.
\end{proposition}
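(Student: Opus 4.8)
The plan is to recast the eigenvalue problem \eqref{eigenpb} for the elliptic operator as a principal eigenvalue problem for a self-adjoint (or at least positive and compact-resolvent) operator on the bounded smooth domain $V$, and then apply the Krein-Rutman machinery exactly as in Proposition \ref{Pcompact}. Writing $\mathcal{L}(Q)(v) = \nabla_v(D(v)\nabla_v Q) - \overline{\Sigma}(v)Q + \mathcal{P}_{\mathrm{BGK}}(Q)$ where the BGK-type part is the (compact, rank-one or bounded) contribution $rM(v)\int_V Q\,dv$, the spectral problem reads
\begin{equation*}
\nabla_v\left( D(v)\nabla_v Q\right) + \left( v\cdot p - \overline{\Sigma}(v)\right)Q + rM(v)\int_V Q\,dv = \mathcal{H}(p)\,Q, \qquad v\in V,
\end{equation*}
together with Neumann boundary conditions $D(v)\nabla_v Q\cdot \nu = 0$ on $\partial V$. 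Since $V$ is bounded and smooth and $D$ is uniformly positive definite, the operator $Q\mapsto -\nabla_v(D\nabla_v Q) + \kappa Q$ with Neumann conditions is, for $\kappa$ large, a positive self-adjoint operator with compact resolvent; its inverse is compact on $\mathcal{C}^0(V)$ by elliptic regularity and the compact Sobolev embeddings on a smooth bounded domain.

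Concretely, first I would fix $p$ and, mimicking the previous proof, introduce a shift parameter $\lambda$ so that $\mathcal{A}_\lambda(v) := \overline{\Sigma}(v) + \lambda - v\cdot p$ is uniformly positive on $V$ for $\lambda > \lambda^* := \sup_{v\in V}(v\cdot p - \Sigma(v))$, which is finite because $V$ is bounded and $\Sigma\in W^{1,\infty}(V)$. Then I would define the solution operator $T_\lambda$ sending $\Phi$ to the unique solution $Q$ of the linear elliptic Neumann problem
\begin{equation*}
-\nabla_v(D\nabla_v Q) + \mathcal{A}_\lambda(v)\,Q = \mathcal{P}(\Phi)(v) \quad\text{in } V, \qquad D\nabla_v Q\cdot\nu = 0 \text{ on }\partial V,
\end{equation*}
where as before $\mathcal{P}(\Phi) = P(\Phi) - (\text{diffusion}) $ is reduced to the nonnegative lower-order and BGK part $rM\int_V\Phi$. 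By the Lax-Milgram theorem $T_\lambda$ is well-defined and bounded on $L^2(V)$, and by elliptic regularity plus the compact embedding it is compact on $\mathcal{C}^0(V)$; the maximum principle for the elliptic operator gives that $T_\lambda$ is positive, and strong positivity follows from the strong maximum principle and Hopf's lemma with Neumann conditions. Applying Krein-Rutman yields a principal eigenpair $(\mu_\lambda,\Phi_\lambda)$ with $\Phi_\lambda > 0$, and the original problem corresponds to the value of $\lambda$ for which $\mu_\lambda = 1$.

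The remaining step is the continuation/monotonicity argument in $\lambda$, identical in spirit to Proposition \ref{Pcompact}: differentiating $T_\lambda\Phi_\lambda = \mu_\lambda\Phi_\lambda$ and pairing with the adjoint principal eigenfunction $\Psi_\lambda$ shows $\mu$ is strictly decreasing in $\lambda$, because $\tfrac{dT_\lambda}{d\lambda}$ is again a negative operator (increasing $\lambda$ increases the zeroth-order absorption $\mathcal{A}_\lambda$, weakening the resolvent). Then I would establish $\mu_\lambda\to 0$ as $\lambda\to+\infty$ and $\mu_\lambda\to+\infty$ as $\lambda\to(\lambda^*)^+$ (the latter using the BGK term $rM\int_V\Phi_\lambda$ together with Fatou's lemma, exactly as before), so by continuity and strict monotonicity there is a unique $\lambda=\mathcal{H}(p)$ with $\mu_\lambda=1$, and $Q_p := \Phi_{\mathcal{H}(p)}$ is the desired positive eigenvector. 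The main obstacle I anticipate is the functional-analytic setup on $\mathcal{C}^0(V)$ rather than $L^2(V)$ — one must justify that the elliptic solution operator with Neumann data maps continuously into $\mathcal{C}^0(V)$ and is compact there (requiring the smoothness of $\partial V$ and $D\in W^{1,\infty}$ for up-to-the-boundary regularity), and that the strong maximum principle yields strict positivity in the interior and, via Hopf's lemma, nonvanishing on the boundary so that $\Phi_\lambda$ lies in the interior of the positive cone as Krein-Rutman demands.
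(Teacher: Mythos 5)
Your strategy is sound and, for $r>0$, is essentially the paper's own proof in disguise: your operator $T_\lambda$ is the rank-one map $\Phi\mapsto \left(\int_V\Phi\right) r\left(-\nabla_v\cdot(D\nabla_v\,\cdot)+\mathcal{A}_\lambda\right)^{-1}(M)$, so its principal eigenvalue is simply $\mu_\lambda=\int_V Q_\lambda\,dv$ with $Q_\lambda$ solving \eqref{eq:eigenlapl}, and your condition $\mu_\lambda=1$ is exactly the paper's dispersion relation $\int_V Q_\lambda(v)\,dv=1$. Two steps, however, need repair. First, your construction collapses when $r=0$: the right-hand side $rM\int_V\Phi$ vanishes, $T_\lambda\equiv 0$, and Krein--Rutman gives nothing. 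The eigenvalue problem \eqref{eigenpb} must also be solved for $r=0$, and there the paper argues differently, applying Krein--Rutman directly to the resolvent $\left(-\nabla_v\cdot(D\nabla_v\,\cdot)-v\cdot p+C\right)^{-1}$ for $C$ large, which is compact and strongly positive on $\mathcal{C}^0(V)$ by elliptic regularity, the strong maximum principle and the Neumann boundary condition. You list all of these ingredients in your first paragraph but never deploy them; as written, the $r=0$ case is missing.

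Second, the claim that $\mu_\lambda\to+\infty$ as $\lambda\to(\lambda^*)^+$ follows ``exactly as before'' by Fatou is too quick. In Proposition \ref{Pcompact} the operator acts by pointwise division by $\mathcal{A}_\lambda$, so Fatou applies directly to $\int_V \mathcal{P}(\Phi_\lambda)/\mathcal{A}_\lambda$. Here $T_\lambda$ involves the elliptic resolvent, not a pointwise division, and one cannot in general minorize $\left(-\nabla_v\cdot(D\nabla_v\,\cdot)+\mathcal{A}_\lambda\right)^{-1}(rM)$ by $rM/\mathcal{A}_\lambda$, since $rM/\mathcal{A}_\lambda$ is not a subsolution of the elliptic problem. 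The paper instead divides \eqref{eq:eigenlapl} by $r+\lambda-v\cdot p$ and integrates over $V$, which after integration by parts yields
\begin{equation*}
\int_V Q_\lambda(v)\,dv=\int_V\frac{rM(v)}{r+\lambda-v\cdot p}\,dv+\int_V\left[\frac{p\cdot\nabla D}{\left(r+\lambda-v\cdot p\right)^2}+\frac{2\vert p\vert^2 D(v)}{\left(r+\lambda-v\cdot p\right)^3}\right]Q_\lambda\,dv,
\end{equation*}
and then argues that the second integral is nonnegative near the endpoint, so that the first, divergent, term forces the blow-up. Some computation of this kind is unavoidable; the Fatou argument from the kernel case does not transfer verbatim to the elliptic case.
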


\begin{proof}[\bf Proof of Proposition \ref{PMcompact}]
The eigenvalue problem to be solved can be written 
\begin{equation*}
 -  \nabla_v \cdot \left( D(v) \nabla_v Q \right) - r \left( M(v) \int_V Q(v') dv' - Q \right)+ \left( \mathcal{H}(p) - v \cdot p \right) Q = 0.
\end{equation*}
Suppose first that $r=0$. In this case, the Krein-Rutman theorem \cite{Krein} on the cone $K =  \mathcal{C}_{+}^{0} \left( V \right)$ gives the result. Indeed, take a sufficiently large $\mathcal{H}(p)$ such that the operator has an inverse. By the strong maximum principle and Neumann boundary conditions the resolvant is then compact and positive.

\noindent Suppose now that $r >0$. One can assume that $\int_V Q(v') dv' = 1$. One then has to solve the following nonhomogeneous problem
\begin{equation}
 -  \nabla_v \cdot \left( D(v) \nabla_v Q \right) + \left( r + \mathcal{H}(p) - v \cdot p\right) Q = r M(v).
\end{equation}
But, by the strong maximum principle and the Neumann boundary conditions, and since $M$ is nonnegative, we deduce that for sufficiently large $\mathcal{H}(p)$, there exists a unique positive solution $Q_p$ to the latter equation. We now have to solve, as for Proposition \ref{Pcompact}, the dispersion relation $\int_V Q(v) dv = 1$ to prove that there is only one $\mathcal{H}(p)$ such that the relation holds. For this purpose, similarly to the proof of Proposition \ref{Pcompact}, we define $Q_\lambda$ solving
\begin{equation}\label{eq:eigenlapl}
 -  \nabla_v \cdot \left( D(v) \nabla_v Q_\lambda \right) + \left( r + \lambda - v \cdot p\right) Q_\lambda = r M(v).
\end{equation}
for some parameter $\lambda$ sufficiently large. Differentiating \eqref{eq:eigenlapl} with respect to $\lambda$, one finds
\begin{equation}
 -  \nabla_v \cdot \left( D(v) \nabla_v \frac{d Q_\lambda}{d \lambda} \right) + \left( r + \lambda - v \cdot p\right) \frac{d Q_\lambda}{d \lambda} = - Q_\lambda.
\end{equation}
As a consequence, $\frac{d Q_\lambda}{d \lambda} < 0$, and thus the application $\lambda \mapsto \int_V Q_\lambda dv$ is decreasing. Now integrating \eqref{eq:eigenlapl} with respect to $v$, we deduce that 
\begin{equation}
 \int_V Q_\lambda(v) dv \leq \frac{r}{\lambda + r - V_{\text{max}} \vert p \vert} \longrightarrow 0, 
\end{equation}
 as $\lambda$ goes to $+ \infty$. Dividing \eqref{eq:eigenlapl} by $r + \lambda - v \cdot p$, and integrating over $V$, we find
\begin{equation}
 \int_V Q_\lambda (v) dv = \int_V \frac{r M(v)}{r + \lambda - v \cdot p} dv + \int_V \left[ \frac{p \cdot \nabla D}{\left( r + \lambda - v \cdot p \right)^2} + \frac{ 2 \vert p \vert^2 D(v)}{\left( r + \lambda - v \cdot p \right)^3} \right] Q_\lambda dv,
\end{equation}
so that as $\lambda$ tends to $V_{\text{max}} \vert p \vert - r$ by larger values, $  \int_V Q_\lambda (v) dv $ tends to $ +\infty$ (since the last integral of the r.h.s is positive for sufficiently small values of $\lambda$).
By a monotonicity argument, we are able to conclude that for all $p \in \R^n$, the dispersion relation $\int_V Q_\lambda(v) dv = 1$ has only one solution, that is called $\mathcal{H}(p)$.
\end{proof}

 \begin{example}In the simple case given by $P(f) = \alpha \Delta f$, the solution of the eigenvalue problem \eqref{eigenpb}
can be written down with Airy functions. It appears in some reaction-diffusion-mutation models without maximum principle.

\end{example}

We finish this section investigating some relevant properties of the Hamiltonian $\mathcal{H}$.

\begin{proposition}\label{propertiesH}
Assume that (H4) holds. Then the Hamiltonian $\mathcal{H}$ is a Lipschitz continuous. It satisfies $\mathcal{H}(0) = 0$ and $\nabla_p \mathcal{H}(0) = 0$. Finally, it also satisfies the scaling property 
\begin{equation*}
\forall \mu \in \R^*, \qquad  \mathcal{H}_{\mu L} = \mu \mathcal{H}_L \left( \frac{\cdot}{\mu} \right),
\end{equation*}
where we denote by $\mathcal{H}_{\mu L}$ the Hamiltonian associated to some operator $L$.
\end{proposition}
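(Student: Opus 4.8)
The starting point of my plan is a \emph{master identity} obtained by integrating the eigenvalue relation \eqref{eigenpb} over $V$. Since $\mathcal{L}$ is mass preserving --- indeed $\int_V \mathcal{L}(f)\,dv = \int_V L(f)\,dv + r\left( \int_V M\,dv\,\rho - \rho \right) = 0$ --- and $Q_p$ is normalized by $\int_V Q_p\,dv = 1$, the integration kills the $\mathcal{L}(Q_p)$ contribution and leaves
\[
\mathcal{H}(p) = \int_V (v\cdot p)\, Q_p(v)\,dv.
\]
Evaluating at $p=0$ gives $\mathcal{H}(0)=0$ at once. To handle the gradient at the origin I would first identify $Q_0$: at $p=0$ the relation \eqref{eigenpb} reads $\mathcal{L}(Q_0)=0$, and since $\mathcal{L}(M)=0$ with $M$ positive and normalized, the uniqueness clause of (H4) forces $Q_0=M$. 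Differentiating the master identity in $p$ and setting $p=0$ then leaves only $\nabla_p \mathcal{H}(0) = \int_V v\,Q_0\,dv = \int_V v\,M(v)\,dv = 0$, by the centering assumption on $M$.

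For the Lipschitz bound I would pass to the \emph{adjoint} spectral problem, which is where the real work lies. Let $W_p>0$ be the principal eigenvector of the adjoint operator, $\mathcal{L}^{*}(W_p) + (v\cdot p)W_p = \mathcal{H}(p)W_p$, whose existence and positivity follow from the same Krein--Rutman argument used for $\Psi_\lambda$ in the proof of Proposition \ref{Pcompact}, normalized by $\langle W_p, Q_p\rangle = 1$. Differentiating \eqref{eigenpb} with respect to $p_j$ and pairing against $W_p$, I would rewrite $\langle W_p, \mathcal{L}(\partial_{p_j}Q_p)\rangle = \langle \mathcal{L}^{*}(W_p), \partial_{p_j}Q_p\rangle$ through the adjoint equation; the contributions $\mathcal{H}(p)\langle W_p,\partial_{p_j}Q_p\rangle$ cancel across the two sides, and the two copies of $\langle (v\cdot p)W_p,\partial_{p_j}Q_p\rangle$ cancel against each other, leaving the Feynman--Hellmann type formula
\[
\nabla_p \mathcal{H}(p) = \int_V v\, W_p(v)\, Q_p(v)\,dv.
\]
Because $W_p, Q_p \geq 0$, $\langle W_p, Q_p\rangle = 1$, and $V$ is bounded by (H0), this yields $\vert \nabla_p \mathcal{H}(p)\vert \leq V_{\text{max}}$ uniformly in $p$, so $\mathcal{H}$ is globally Lipschitz. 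As a consistency check, since $\mathcal{L}^{*}(1)=0$ gives $W_0\equiv 1$, the formula reproduces $\nabla_p\mathcal{H}(0)=\int_V vM\,dv=0$.

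The scaling identity is purely algebraic. Writing \eqref{eigenpb} for the scattering operator $\mu L$ and dividing through by $\mu$,
\[
L(Q_p) + \Big(v\cdot \tfrac{p}{\mu}\Big) Q_p = \frac{\mathcal{H}_{\mu L}(p)}{\mu}\, Q_p,
\]
one recognizes exactly the spectral problem associated with $L$ at momentum $p/\mu$, with eigenvalue $\mathcal{H}_{\mu L}(p)/\mu$ and the \emph{same} eigenvector $Q_p$; the structural point that makes this work is that $v\cdot p$ is linear in $p$. The uniqueness part of (H4) then identifies $\mathcal{H}_{\mu L}(p)/\mu = \mathcal{H}_L(p/\mu)$, which is the claim.

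The main obstacle is the adjoint computation in the second paragraph: one must construct the adjoint eigenpair, justify its positivity and the normalization $\langle W_p, Q_p\rangle = 1$, and check that every term except $\int_V v\,W_p Q_p\,dv$ cancels. Everything else --- the values $\mathcal{H}(0)$ and $\nabla_p\mathcal{H}(0)$, and the scaling relation --- is elementary once the master identity and the identification $Q_0=M$ are in place. A minor point of care is that the scaling argument is cleanest when only the scattering part is rescaled (or, for $r>0$, the reaction rate is rescaled accordingly), so that $v\cdot p$ remains the sole $p$-dependent term after division by $\mu$.
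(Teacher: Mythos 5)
Your proposal is correct and follows essentially the same route as the paper: integrating the eigenvalue relation over $V$ (using mass preservation and the normalization of $Q_p$) to get $\mathcal{H}(p)=\int_V (v\cdot p)Q_p\,dv$ and hence $\mathcal{H}(0)=0$; the adjoint eigenpair and a Feynman--Hellmann computation to get $\nabla_p\mathcal{H}=\langle W_p\,\vert\, vQ_p\rangle/\langle W_p\,\vert\,Q_p\rangle$ and the Lipschitz bound $V_{\max}$; the identifications $Q_0=M$, $W_0=1$ for $\nabla_p\mathcal{H}(0)=0$; and uniqueness in (H4) for the scaling identity.
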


\begin{proof}[\bf Proof of Proposition \ref{propertiesH}]

We get that $\mathcal{H}(0) = 0$ as a byproduct of the integration of \eqref{eigenpb} over $V$:
\begin{equation*}
\forall p \in \R^n, \qquad \vert \mathcal{H}(p) \vert = \left\vert \left( \int_{V} v Q_p(v) dv \right) \cdot p \right\vert \leq V_{max} \vert p \vert.
\end{equation*}
This latter inequality prove the sublinear behavior of the Hamiltonian. To prove the Lipschitz character of the Hamiltonian, we again use the adjoint formulation of  \eqref{eigenpb}. Indeed, we can solve it as for the direct problem, so that there exists $W_p$ such that 
\begin{equation*}
\mathcal{P}\left( Q_{p} \right)(v) = \left(\Sigma(v) + \mathcal{H}(p) - v \cdot p \right) Q_{p}(v), \qquad \mathcal{P}^{*}( W_{p} )(v) = \left(\Sigma(v) + \mathcal{H}(p) - v \cdot p\right) W_{p}(v). 
\end{equation*}
Differentiating these two equalities with respect to $p$, we get
\begin{equation*}
\left(\Sigma(v) + \mathcal{H}(p) - v \cdot p \right)  \frac{d Q_{p}}{d p} + Q_p \left( \nabla_p \mathcal{H} - v \right) = \mathcal{P} \left( \frac{d Q_{p}}{d p} \right),
\end{equation*}
As previously performed, we integrate against $W_p$,
\begin{equation*}
\left\langle  (\Sigma(v) + \mathcal{H}(p) - v \cdot p) W_p \Big\vert \frac{d Q_{p}}{d p} \right\rangle + \langle W_{p} \vert Q_{p} \left( \nabla_p \mathcal{H} - v \right) \rangle = \left\langle W_{p} \Big\vert \mathcal{P} \left( \frac{d Q_{p}}{d p} \right) \right\rangle
\end{equation*}
so that
\begin{equation}\label{ordre1}
\langle W_{p} \vert Q_{p} \left( \nabla_p \mathcal{H} - v \right) \rangle = 0 \Longleftrightarrow \nabla_p \mathcal{H} = \frac{ \langle W_{p} \vert v Q_{p} \rangle }{ \langle W_{p} \vert Q_{p} \rangle } \Longleftrightarrow \vert \nabla_p \mathcal{H} \vert \leq V_{max},
\end{equation}
and this gives that $\mathcal{H}$ is Lipschitz. Moreover, we always have $Q_0(v) = M(v)$ and $W_0 = 1$, the last one coming from the conservation property. Thus, 
\begin{equation*}
\nabla_p \mathcal{H} ( 0 )=\langle W_{0} \vert v Q_{0} \rangle = \int_{V} v M(v) dv = 0.
\end{equation*}

The last point follows from the uniqueness of the solution $\mathcal{H}$ of the eigenvalue problem \eqref{eigenpb}. Indeed, we have for all $\mu \in \R^*$, 
\begin{equation*}
\forall v \in V, \qquad \mu \mathcal{H}\left(\frac{p}{\mu}\right) = v \cdot p + \frac{\mu \mathcal{L}(\hat Q_p)}{\hat Q_p}(v), 
\end{equation*}
with $\hat Q_p = Q_{\mu p}$, where $Q_p$ is an eigenvector for $\mathcal{H}(p)$. 
\end{proof}

\begin{remark}
\begin{enumerate}
\item Here appears one of the most striking conclusion of our study. The Hamiltonian of the limiting equation in the large deviation regime is Lipschitz continuous. It differs strongly from the case of th Fisher-KPP equation which is obtained as the dryft-diffusion limit of \eqref{KinEq}. This means that the diffusion limit is not compatible with large deviations and thus propagation of fronts.
\item A classical attempt in the Hamilton-Jacobi theory is the convexity of the Hamiltonian. In \cite{Bouin-Calvez}, the authors manage to prove that for the simplest BGK case, it is indeed convex. However, it seems not to be an easy issue in general. We were not able to conclude if the Hamiltonian is convex or not. 

\item Thanks to the Proposition \ref{propertiesH}, we can replace $\mathcal{L}$ by a more "barycentric" one $\mathcal{L}_r$:
\begin{equation}\label{barop}
\mathcal{L}_r (f) = \frac{L(f) + r \left( M(v) \rho - f \right)}{1+r},
\end{equation}
solve the underlying eigenvalue problem \eqref{eigenpb} to get an Hamiltonian $\mathcal{H}_{\mathcal{L}_r}$ and deduce the following relation
\begin{equation*}
\forall p \in \R^n, \qquad \mathcal{H}( p ) = (1+r) \mathcal{H}_{\mathcal{L}_r}\left( \frac{p}{1+r} \right).
\end{equation*}
The latter identity can be useful for example when $L$ is also a BGK operator, as in \cite{Bouin-Calvez}.
\item One could want to derive a expression of the total Hamiltonian which only depends on the Hamiltonian associated to $L$. However, even though the BGK operator and $L$ commute, we cannot generally derive an expression for the Hamiltonian of their sum. Indeed, the construction of solutions of the spectral problem shows that the Hamiltonians appear as spectral radius of operator. Basically, it is not possible to obtain a exact general formula for the spectral radius of the sum of two operators.

\end{enumerate}
\end{remark}

\section{Asymptotics, numerics and comments.}\label{Asymptotics}
\subsection{Further asymptotics.}

This subsection aims at proving some convergence results for the total density $\rho^\eps$ in both regions $\lbrace \varphi^0 = 0 \rbrace$ and $\lbrace \varphi^0 > 0 \rbrace$.

\begin{proposition}\label{zones1}
Let $\varphi^\eps$ be the solution of \eqref{KinEqPhi}. Theorem \ref{HJlimit} says that it converges locally uniformly towards a nonpositive $\varphi^0$, the unique viscosity solution of \eqref{varHJ}. Uniformly on compact subsets of $\text{Int}\left\lbrace \varphi^0 > 0\right\rbrace$, the convergence $\lim_{\eps \to 0} f^{\eps} = 0$ holds, and is exponentially fast.
\end{proposition}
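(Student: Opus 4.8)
The plan is to exploit the WKB ansatz \eqref{WKBansatz} together with the convergence result of Theorem \ref{HJlimit} and the uniform estimates of Proposition \ref{estimate}. Since $f^\eps(t,x,v) = M(v) e^{-\varphi^\eps(t,x,v)/\eps}$ and $M \in L^1(V)$ is bounded on the compact set $V$ (recall (H0)), controlling $f^\eps$ amounts to producing a \emph{lower} bound on the phase $\varphi^\eps$ that stays bounded away from zero on compact subsets of $\mathrm{Int}\{\varphi^0 > 0\}$. The key observation is that Theorem \ref{HJlimit} gives locally uniform convergence $\varphi^\eps \to \varphi^0$, and $\varphi^0$ does not depend on $v$.

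The main steps are as follows. First I would fix a compact set $K \subset \mathrm{Int}\{\varphi^0 > 0\} \cap (\R^{+*}\times\R^n)$. Since $\varphi^0$ is continuous (being a locally uniform limit of continuous functions) and strictly positive on $K$, by compactness there exists $m > 0$ with $\varphi^0 \geq 2m$ on $K$. Second, I would use the locally uniform convergence of $\varphi^\eps$ towards $\varphi^0$ on the compact set $K \times V$ (here (H0), which guarantees $V$ is bounded, is crucial so that $K \times V$ is compact): for $\eps$ small enough,
\begin{equation*}
\forall (t,x,v) \in K \times V, \qquad \varphi^\eps(t,x,v) \geq \varphi^0(t,x) - m \geq m > 0.
\end{equation*}
Third, I would plug this lower bound into the ansatz to obtain, for $\eps$ small,
\begin{equation*}
0 \leq f^\eps(t,x,v) = M(v) e^{-\frac{\varphi^\eps(t,x,v)}{\eps}} \leq \Vert M \Vert_{L^\infty(V)}\, e^{-\frac{m}{\eps}},
\end{equation*}
which tends to $0$ uniformly on $K \times V$ as $\eps \to 0$, and does so exponentially fast in $1/\eps$. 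This proves both the vanishing of $f^\eps$ and the claimed exponential rate.

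The only genuine subtlety to address carefully is the exchange of ``$\varphi^0 > 0$'' (a condition on the $v$-independent limit) with a lower bound on the $v$-dependent $\varphi^\eps$, which is exactly why the locally uniform convergence \emph{on $K \times V$} — rather than merely pointwise or $v$-averaged convergence — is needed; this is precisely where boundedness of $V$ from (H0) enters. A minor point worth noting is that $M$ may only be in $L^1(V)$ rather than $L^\infty(V)$, in which case one replaces the pointwise bound $\Vert M \Vert_{L^\infty}$ by integrating against $M$ to control $\rho^\eps = \int_V f^\eps\, dv \leq e^{-m/\eps}$, giving the exponential decay of the macroscopic density; the statement for $f^\eps$ itself then holds at points of Lebesgue continuity of $M$, or pointwise wherever $M$ is locally bounded. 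I do not expect any serious obstacle here: the heavy lifting has already been done in establishing the uniform convergence of the phase.
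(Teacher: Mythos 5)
Your proof is correct and follows essentially the same route as the paper: fix a compact $K\subset\mathrm{Int}\{\varphi^0>0\}$, use the locally uniform convergence to get a uniform positive lower bound $\varphi^\eps\geq m$ on $K\times V$ for small $\eps$, and conclude $f^\eps\leq M(v)e^{-m/\eps}\to 0$ exponentially fast. The paper simply keeps the factor $M(v)$ in the bound rather than invoking $\Vert M\Vert_{L^\infty}$, so your remark about $M$ being only $L^1$ is a reasonable but inessential refinement.
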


\begin{proposition}\label{zones2}
Let $\varphi^\eps$ be the solution of \eqref{KinEqPhi}. Assume now that $r > 0$. Then, uniformly on compact subsets of $\text{Int} \left\lbrace \varphi^0 = 0\right\rbrace $, 
\begin{equation*}
\lim_{\eps \to 0} \rho^\eps = 1, \qquad \lim_{\eps \to 0} f^\eps \left( \cdot , v \right) = M(v).
\end{equation*}

\end{proposition}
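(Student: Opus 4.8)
The plan is to separate the statement into a trivial upper bound and a more delicate lower (saturation) bound for $\rho^\eps$, and only at the very end transfer the conclusion from $\rho^\eps$ to $f^\eps$. The upper bounds cost nothing: the maximum principle recorded in Proposition~\ref{estimate}(i) gives $0 \le \varphi^\eps$, hence $0 \le f^\eps = M e^{-\varphi^\eps/\eps} \le M$ and $\rho^\eps \le \int_V M\,dv = 1$. So the whole content is to show that $\liminf_{\eps \to 0}\rho^\eps \ge 1$, uniformly on compact subsets of $\mathrm{Int}\{\varphi^0 = 0\}$.

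The natural tool is the macroscopic balance obtained by integrating the rescaled kinetic equation over $V$. Since $L$ is mass preserving and $\int_V M\,dv = 1$, one finds
\begin{equation*}
\partial_t \rho^\eps + \nabla_x \cdot J^\eps = \frac{r}{\eps}\,\rho^\eps(1 - \rho^\eps), \qquad J^\eps := \int_V v\, f^\eps(\cdot,v)\, dv,
\end{equation*}
with the bound $|J^\eps| \le V_{\max}\,\rho^\eps \le V_{\max}$. Testing this identity against a nonnegative $\zeta \in \mathcal{C}^\infty_c(Q)$, where $Q \Subset \mathrm{Int}\{\varphi^0 = 0\}$ is a space-time cylinder, and integrating by parts turns the left-hand side into $-\eps\int_Q(\rho^\eps\partial_t\zeta + J^\eps\cdot\nabla_x\zeta)$, which is $O(\eps)$. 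Hence $\int_Q r\,\rho^\eps(1-\rho^\eps)\,\zeta \to 0$, and since $0 \le \rho^\eps \le 1$ this forces $\rho^\eps(1-\rho^\eps) \to 0$ in $L^1_{loc}$, i.e. $\rho^\eps$ concentrates on $\{0\}\cup\{1\}$. I stress that this step alone does \emph{not} yield the proposition: the locally uniform convergence $\varphi^\eps \to \varphi^0 = 0$ controls $\varphi^\eps$ but not the ratio $\varphi^\eps/\eps$ that governs $\rho^\eps$, so a priori $\rho^\eps$ could still concentrate on the value $0$.

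The hard part is therefore to exclude the value $0$ on $\mathrm{Int}\{\varphi^0=0\}$ and to make the lower bound uniform. My plan is a comparison argument exploiting the stiffness (the $1/\eps$ factor) of the monostable reaction against the bounded transport: one constructs on a subcylinder a subsolution $\underline{\rho}^\eps$ relaxing to $1$ on the fast time scale $\eps$, started from a small but strictly positive value. The positivity of the initial datum of the barrier is legitimate because the zone $\{\varphi^0>0\}$ is precisely where $f^\eps \to 0$ (Proposition~\ref{zones1}), so its complement cannot sustain $\rho^\eps \to 0$; combined with the $L^1_{loc}$ dichotomy above, this pins the limit to $1$. The genuine difficulty I anticipate here is that the $\rho^\eps$-equation is \emph{not closed}, the flux term $\nabla_x\cdot J^\eps$ not being a function of $\rho^\eps$ alone, so a naive scalar maximum-principle comparison fails; the flux must be tamed using the a priori bound $|J^\eps| \le V_{\max}$ together with the Lipschitz estimates of Proposition~\ref{estimate}, which make $\eps\,\nabla_x\cdot J^\eps$ bounded.

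Finally, the statement on $f^\eps$ follows from the one on $\rho^\eps$. Since $M - f^\eps \ge 0$ and $\int_V(M - f^\eps)\,dv = 1 - \rho^\eps \to 0$, we obtain $f^\eps \to M$ in $L^1(V)$; the pointwise-uniform convergence $f^\eps(\cdot,v) \to M(v)$ then comes from the velocity-gradient bound of Proposition~\ref{estimate}(iv), which furnishes the equicontinuity in $v$ needed to upgrade integral smallness to uniform smallness on compact time intervals. To summarize, I expect the essential obstacle to be the lower bound, and specifically the combination of excluding $\rho^\eps \to 0$ and making the bound uniform, since this is exactly the point where one must recover control of $\varphi^\eps/\eps$ from data that only constrain $\varphi^\eps$.
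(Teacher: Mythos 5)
Your upper bound, your macroscopic balance, and your diagnosis of where the difficulty lies are all sound, but the step that is supposed to resolve that difficulty is circular, so there is a genuine gap. You correctly observe that the locally uniform convergence $\varphi^\eps\to 0$ gives no control on $\varphi^\eps/\eps$ and hence does not by itself rule out $\rho^\eps\to 0$; yet your barrier argument is launched from ``a small but strictly positive value'' whose positivity you justify by saying that the complement of $\{\varphi^0>0\}$ ``cannot sustain $\rho^\eps\to 0$''. That assertion is exactly the statement to be proved: Proposition \ref{zones1} only gives decay \emph{on} $\{\varphi^0>0\}$ and says nothing about a lower bound, uniform in $\eps$, anywhere in $\{\varphi^0=0\}$. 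Without such a seed the comparison argument cannot start, and the $L^1_{loc}$ dichotomy $\rho^\eps(1-\rho^\eps)\to 0$ leaves the value $0$ available. (Your closing step is also shakier than you suggest: $\nabla_v f^\eps$ carries a factor $\nabla_v\varphi^\eps/\eps$, so Proposition \ref{estimate}(iv) does \emph{not} give equicontinuity of $f^\eps$ in $v$.)

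The paper's mechanism for excluding $\rho^\eps\to 0$ is different and is the missing idea: it re-runs the subsolution (Step 2) machinery with the explicit test function $\psi^0(t,x)=|x-x_0|^2+(t-t_0)^2$, which touches $\varphi^0\equiv 0$ from above at $(t_0,x_0)$. At the corrected maximum points $(t^\eps,x^\eps,v^\eps)$ the inequality \eqref{subfinal} reads $\partial_t\psi^\eps+\mathcal{H}(\nabla_x\psi^\eps)+r\le o(1)+r\rho^\eps(t^\eps,x^\eps)$, and since $\partial_t\psi^0$, $\nabla_x\psi^0$ vanish at the center and $\mathcal{H}(0)=0$, the left side tends to $r$, forcing $\rho^\eps(t^\eps,x^\eps)\ge 1+o(1)$. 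This is precisely the point where the viscosity structure converts information on $\varphi^0$ into information on $\varphi^\eps/\eps$. The bound is then propagated from the sequence of maximum points to all of $K$ by a Gr\"onwall argument on $\partial_t\rho^\eps$, which upgrades the Lipschitz bounds to $\|\partial_t\varphi^\eps\|_{L^\infty(K\times V)}=\mathcal{O}(\eps)$ (and similarly in $x$); plugging this into the velocity-integrated equation \eqref{KinEqPhi1} gives $r\rho^\eps(1-\rho^\eps)\le \mathcal{O}(\eps)\,\rho^\eps$, hence $\rho^\eps\to 1$ uniformly on $K$ and then $f^\eps\to M$. If you want to salvage your route, you must replace the circular positivity claim by this (or an equivalent) use of the subsolution inequality at an interior touching point.
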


\begin{remark}

Assume that $r=0$ and $\mathcal{H}$ is convex. Then the mass stays at its initial position:
\begin{equation*}
\lbrace (t,x) \in \R^+ \times \R^n \, \vert \, \varphi^0(t,x) = 0 \rbrace = \R^+ \times \lbrace x \in \R^n \, \vert \, \varphi_0(x) = 0 \rbrace.
\end{equation*}
Indeed, one can write the solution of the standard Hamilton-Jacobi equation \eqref{standHJ} with the Hopf-Lax formula:
\begin{equation*}
\varphi^0(t,x) = \inf_{\gamma \in X} \left\lbrace \varphi_0 \left( \gamma(0) \right) + \int_{0}^{t} \mathcal{M} \left( \dot \gamma (t) \right) dt  \, \Big\vert \, \gamma(t) = x \right\rbrace,
\end{equation*}
where $\mathcal{M}$ is the Lagrangian associated to $\mathcal{H}$. Since $\nabla_p \mathcal{H} (0) = 0$ and $ \mathcal{H} $ is strictly convex, so does $\mathcal{M}$, and as a consequence $\mathcal{M}$ is positive away from $0$. We deduce that 
\begin{equation*}
\varphi^0(t,x) = 0 \quad \Longleftrightarrow \quad t \in \R^+ \quad \textrm{and} \quad \varphi_0 (x) = 0.
\end{equation*}

%
\end{remark}

\begin{proof}[\bf Proof of Proposition \ref{zones1}]
Let $K$ be a compact subset of $\text{Int}\left\lbrace \varphi^0 > 0\right\rbrace$. The local uniform convergence of $\varphi^\eps$ towards $\varphi^0$ ensures that there exists $\delta > 0$ such that for sufficiently small $\eps > 0$, $\varphi^\eps \geq \delta$ on $K$. As a consequence,
\begin{equation*}
\forall (t,x,v) \in K \times V, \qquad f^\eps(t,x,v) = M(v) \exp\left( - \frac{\varphi^\eps(t,x,v)}{\eps} \right)  < M(v)\exp\left( - \frac{\delta}{\eps} \right)  \underset{\eps \to 0}{\longrightarrow} 0.
\end{equation*}
\end{proof}

\begin{proof}[\bf Proof of Proposition \ref{zones2}]
We develop similar arguments as in \cite{Evans-Souganidis}. Note that it suffices to prove the result when $K$ is a cylinder. Let $(t_0,x_0) \in \text{Int}\left( K \right)$ and the test function 
\begin{equation*}
\forall (t,x) \in K, \qquad \psi^0(t,x) = \vert x - x_0 \vert^2 + \left( t - t_0 \right)^2.
\end{equation*}
We can define the same corrected test function $\psi^\eps$ as in the viscosity procedure of Section \ref{HJProof}. Since $\varphi^0 = 0$ on $K$, the function $\varphi^0 -  \psi^0$ admits a strict maximum in $(t_0,x_0)$. The locally uniform convergence of $\varphi^\eps -  \psi^0$ gives a sequence $(t^{\eps},x^{\eps},v^{\eps})$ of maximum points with $(t^{\eps},x^{\eps}) \to (t^0,x^0)$ and a bounded sequence $v^{\eps}$ such that at the point $(t^{\eps},x^{\eps},v^{\eps})$ one has (see \eqref{subfinal}):
\begin{equation*}
\partial_t \psi^{\eps}  + \mathcal{H} \left( \nabla_x \psi^{\eps}  \right) + r  \leq \frac{\mathcal{P}\left(Q_{\left[ \nabla_x \psi^{\eps} (t^\eps,x^{\eps},v^{\eps}) \right]}  \right)}{Q_{\left[ \nabla_x \psi^{\eps} (t^\eps,x^{\eps},v^{\eps}) \right]}  } (v^{\eps}) - \frac{\mathcal{P}\left(Q_{\nabla_x \psi^{0} (t^\eps,x^{\eps}) } \right)}{Q_{\left[ \nabla_x \psi^{0} (t^\eps,x^{\eps})\right]}  }(v^{\eps}) + r \rho^\eps(t^{\eps},x^{\eps}).
\end{equation*}
As a consequence, one has, since $r>0$, 
\begin{equation}\label{convepsrho}
\rho^\eps(t^{\eps},x^{\eps}) \geq 1 + o(1), \qquad \textrm{as } {\eps \to 0},
\end{equation}
and then $\lim_{\eps \to 0}  \rho^\eps(t^{\eps},x^{\eps}) = 1$ if one recalls $\rho^\eps \leq 1$ (which, again, is a consequence of the maximum principle). 

However, we need an extra argument to get the result in $(t_0,x_0)$ and then on all $K$. One has, for all $(t,x) \in K$,
\begin{equation*}
\partial_t \rho^\eps = - \frac{1}{\eps}   \int_V M(v) e^{- \frac{\varphi^\eps(t,x,v)}{\eps} } \partial_t  \varphi^\eps(t,x,v) dv. 
\end{equation*} 
As a consequence, for all $(t,x) \in K$,
\begin{equation*}
\left \vert \partial_t \rho^\eps (t,x) \right\vert \leq \frac{\Vert \partial_t \varphi^\eps (t,x,\cdot)\Vert_{L^\infty \left( V \right)}}{\eps} \int_V M(v) e^{- \frac{\varphi^\eps(t,x,v)}{\eps} } dv = \frac{\Vert \partial_t \varphi^\eps (t,x,\cdot)\Vert_{L^\infty \left( V \right)}}{\eps} \rho^\eps.
\end{equation*}
By the Grönwall Lemma, for all $t > t_0$ such that $(t,x^\eps) \in \text{Int}(K)$, 
\begin{equation*}
\rho^\eps(t^\eps , x^\eps) \leq \rho^{\eps}(t, x^\eps) e^{ \frac{\Vert \partial_t \varphi^\eps \Vert_{\infty,K}}{\eps}  \left( t_\eps - t\right)} \leq e^{ \frac{\Vert \partial_t \varphi^\eps \Vert_{\infty,K}}{\eps}\left( t_\eps - t\right)}.
\end{equation*}
This latter equation coupled to \eqref{convepsrho} gives that necessarily $\Vert \partial_t \varphi^\eps \Vert_{L^\infty \left( K \times V \right)} = \mathcal{O}(\eps)$. The same argument on the space derivatives gives the same conclusion for $\Vert \nabla_x \varphi^\eps \Vert_{L^\infty \left( K \times V \right)}$. Now, multiplying \eqref{KinEqPhi1} by $M(v) e^{ - \frac{\varphi^{\eps}}{\eps} }$ and integrating with respect to $v$ gives
\begin{equation*}
\forall (t,x) \in K, \qquad r \rho^\eps \left( 1 - \rho^\eps \right) \leq \left( \Vert \partial_t \varphi^\eps \Vert_{L^\infty} +  \vert v_{\max} \vert \cdot \Vert \nabla_x \varphi^\eps \Vert_{L^\infty} \right) \rho^\eps
\end{equation*}
from which we deduce that $\lim_{\eps \to 0} \rho^\eps(t,x) = 1$ locally uniformly on $K$ since $r>0$. This implies necessarily $\lim_{\eps \to 0} f^\eps(t,x,v) = M(v)$ locally uniformly on $K \times V$.

\end{proof}
\subsection{Study of the viscosity solution and of the speed of propagation.}

To be self-contained, we recall here how to study the propagation of the front after deriving the limit variational equation, in the case $r>0$. From Evans and Souganidis \cite{Evans-Souganidis}, we are able to identify the solution of the variational Hamilton-Jacobi equation \eqref{varHJ} using the Lagrangian duality. We recall the equation:
\begin{equation*}
\begin{cases}
\min\left \lbrace \partial_t \varphi^0 + \mathcal{H} \left(\nabla_x \varphi^0 \right) + r, \varphi^0 \right\rbrace = 0, \qquad  \forall (t,x) \in \R_+^* \times \R^n, \medskip\\
\varphi^{0}(0,x) = \varphi_0(x).
\end{cases}
\end{equation*}
We will suppose in this Subsection that the hamiltonian $\mathcal{H}$ is convex and is a function of $\vert p \vert$. The relevant result in the present context is the following 

\begin{proposition}[Speed of propagation]\label{nullset}
Assume that 
\begin{equation*}
\varphi_0(x) := \left\lbrace\begin{array}{lcl}
0& x=0\\
+ \infty & \text{ else}\\
\end{array}\right., 
\end{equation*}
and define $c^* = \inf_{p > 0}\left( \frac{\mathcal{H}(p) + r}{p} \right)$, see \cite{Fedotov,Fedotov99}. Then the nullset of $\varphi$ propagates at speed $c^*$ :
\begin{equation*}
\forall t \geq 0, \qquad \left\{ \varphi(t, \cdot ) = 0 \right\} = B(0, c^* t ).
\end{equation*}

\end{proposition}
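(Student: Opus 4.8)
The plan is to reduce everything to the explicit Lagrangian (Hopf--Lax) representation of the viscosity solution of the constrained equation \eqref{varHJ}, and then to perform a convex--duality computation that uses the two standing assumptions that $\mathcal{H}$ is convex and radial (a function of $\vert p \vert$). The Hamiltonian properties of Proposition \ref{propertiesH}, namely $\mathcal{H}(0)=0$ and $\nabla_p\mathcal{H}(0)=0$, together with convexity, guarantee that $0$ is the global minimum of $\mathcal{H}$, hence $\mathcal{H}\ge 0$; this is what makes the obstacle problem behave well.

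First I would recall from Evans and Souganidis \cite{Evans-Souganidis} that the unique viscosity solution of the obstacle problem \eqref{varHJ} is the positive part of the free Hamilton--Jacobi solution. Writing $\tilde{\mathcal{H}}(p):=\mathcal{H}(p)+r$ for the full Hamiltonian, its Legendre transform is $\tilde{\mathcal{H}}^{*}=\mathcal{M}-r$, where $\mathcal{M}$ is the Lagrangian associated to $\mathcal{H}$. The Hopf--Lax formula gives the unconstrained solution $u(t,x)=\inf_{y}\big(\varphi_0(y)+t\,\tilde{\mathcal{H}}^{*}((x-y)/t)\big)$; with the chosen datum only $y=0$ contributes, so $u(t,x)=t\,\mathcal{M}(x/t)-rt$. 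One then checks that $\varphi(t,x)=\max\big(0,u(t,x)\big)$ solves \eqref{varHJ} in the viscosity sense: on $\{u>0\}$ it equals $u$, solves the Hamilton--Jacobi equation and is positive, while on $\{u<0\}$ it vanishes and the supersolution inequality collapses to $\tilde{\mathcal{H}}(0)=r\ge 0$. Consequently the nullset is read off directly as
\[
\{\varphi(t,\cdot)=0\}=\{x:\ u(t,x)\le 0\}=\{x:\ \mathcal{M}(x/t)\le r\}.
\]

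It remains to identify $\{q:\mathcal{M}(q)\le r\}$ with a ball. I would use duality: $\mathcal{M}(q)\le r$ is equivalent to $p\cdot q-\mathcal{H}(p)\le r$ for all $p$, i.e.\ $p\cdot q\le \mathcal{H}(p)+r$ for all $p$. Since $\mathcal{H}$ is radial, the binding constraints are those with $p$ parallel to $q$, so with $\vert q\vert=c$ the condition reduces to $c\le \frac{\mathcal{H}(p)+r}{p}$ for every scalar $p>0$, that is $c\le c^{*}$. As $\mathcal{M}$ is a nonnegative radial convex function with $\mathcal{M}(0)=0$, its sublevel set $\{\mathcal{M}\le r\}$ is a genuine ball of radius $c^{*}$ for $r>0$; rescaling $q=x/t$ yields $\{\varphi(t,\cdot)=0\}=\overline{B}(0,c^{*}t)$, the boundary being included because $\mathcal{M}(c^{*})=r$. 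This is precisely the claim.

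I expect the main obstacle to be the rigorous justification of the representation $\varphi=u^{+}$ for the constrained problem, and in particular the verification of the viscosity sub/supersolution inequalities across the free boundary $\{u=0\}$, where one must use $\mathcal{H}(0)=0$ in an essential way. The second delicate point is making the duality step airtight, i.e.\ justifying the reduction to $p$ parallel to $q$ from radiality and convexity and the exact matching with the infimum defining $c^{*}$. Once these are settled, the explicit identification of the ball and both inclusions are routine.
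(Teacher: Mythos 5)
Your proposal is correct and follows essentially the same route as the paper: both represent the solution of the obstacle problem as the positive part of the Hopf--Lax solution (the Freidlin/Evans--Souganidis reduction), compute $J(t,x)=t\bigl(\mathcal{M}(x/t)-r\bigr)$ from the point-mass datum, and identify the sublevel set $\{\mathcal{M}\le r\}$ with the ball of radius $c^*$ by convex duality and radiality of $\mathcal{H}$. The only difference is cosmetic — you propose to verify the sub/supersolution inequalities for $u^+$ directly where the paper simply cites the Freidlin condition.
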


\begin{proof}[\bf Proof of Proposition \ref{nullset}]

The Lagrangian associated to $\mathcal{H} + r $ is by definition 
\begin{equation*}
 \mathcal{L}(p):= \sup_{q \in \R^n} \left( p \cdot q - \mathcal{H}(q) - r  \right),
\end{equation*}
and one has, since $\mathcal{H}(q) = \mathcal{H}\left( \vert q \vert \right)$:
\begin{equation*}
\mathcal{L}(p) = \sup_{q \in \R^n} \left( \vert p \vert \vert q \vert \left( \frac{p}{\vert p \vert} \cdot \frac{q}{\vert q \vert} \right) - \mathcal{H}( \vert q \vert ) - r \right) = \sup_{q \in \R^n} \left( \vert p \vert \vert q \vert  - \mathcal{H}( \vert q \vert ) - r \right).
\end{equation*}
\begin{equation*}
\mathcal{L}(p) = 0 \quad \Longleftrightarrow \quad \vert p \vert =  \inf_{u > 0}\left( \frac{\mathcal{H}(u) + r}{u} \right) =  c^*.
\end{equation*}
To solve the variational Hamilton-Jacobi equation, let us define
\begin{equation*}
J(x,t) = \inf_{x \in X} \left \lbrace \int_{0}^{t}  \left[ \mathcal{L}( \dot x ) \right] ds \big\vert x(0) = x, x(t) = 0 \right \rbrace 
\end{equation*}
the minimizer of the action associated to the Lagrangian. Thanks to the so-called Freidlin condition, see \cite{Evans-Souganidis, Freidlin} we deduce that the solution of \eqref{varHJ} is 
\begin{equation*}
\varphi(x,t) = \max \left( J(x,t) , 0 \right) . 
\end{equation*}
The Lax formula gives 
\begin{equation*}
J(x,t) = \min_{y \in \R} \left\{ t \mathcal{L}\left( \frac{x-y}{t} \right) + \varphi_0(y)     \right\} = t \mathcal{L}\left( \frac{x}{t} \right)
\end{equation*}
thanks to the assumption on the initial condition. Finally, as $\mathcal{L}$ is increasing with $\vert p \vert$, the nullset of $\varphi$ is exactly $B(0, c^* t )$.

%
%
%

\end{proof}

\subsection{Numerical simulations}

We show in Figure \ref{num1} some numerical simulations of the evolution of the nullset of the solution of the variational Hamilton-Jacobi equation to illustrate our study. The speed of the front is easily numerically computable with this approach. When the Hamiltonian is not known explicitly, which is the most frequent case, it is still possible to solve numerically the spectral problem (H4) to obtain a numerical Hamiltonian, which can afterwards be used to compute the whole numerical solution. 
\begin{figure}[ht]\label{num1}
\begin{center}
\includegraphics[width= 0.49 \linewidth]{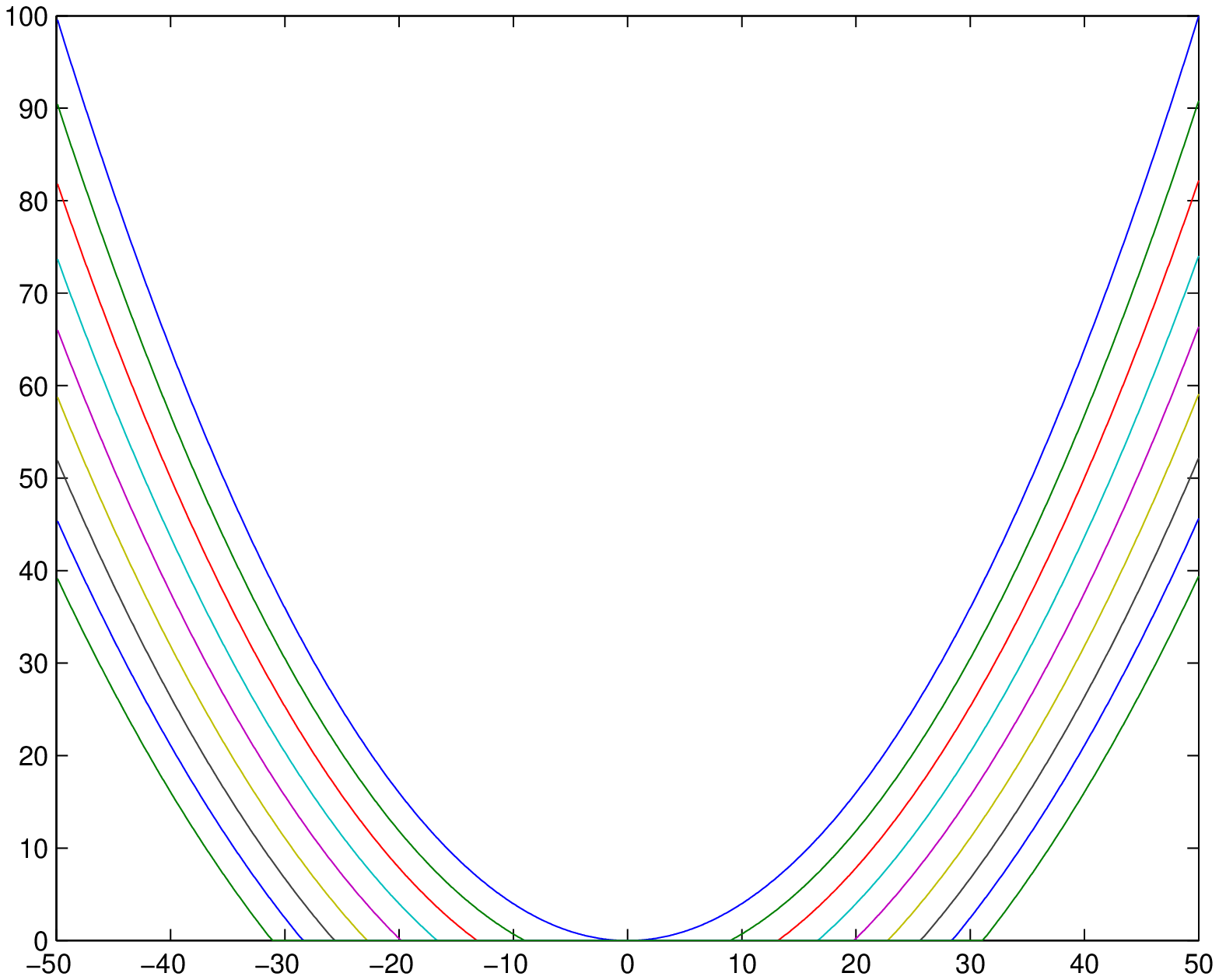}
\includegraphics[width= 0.49 \linewidth]{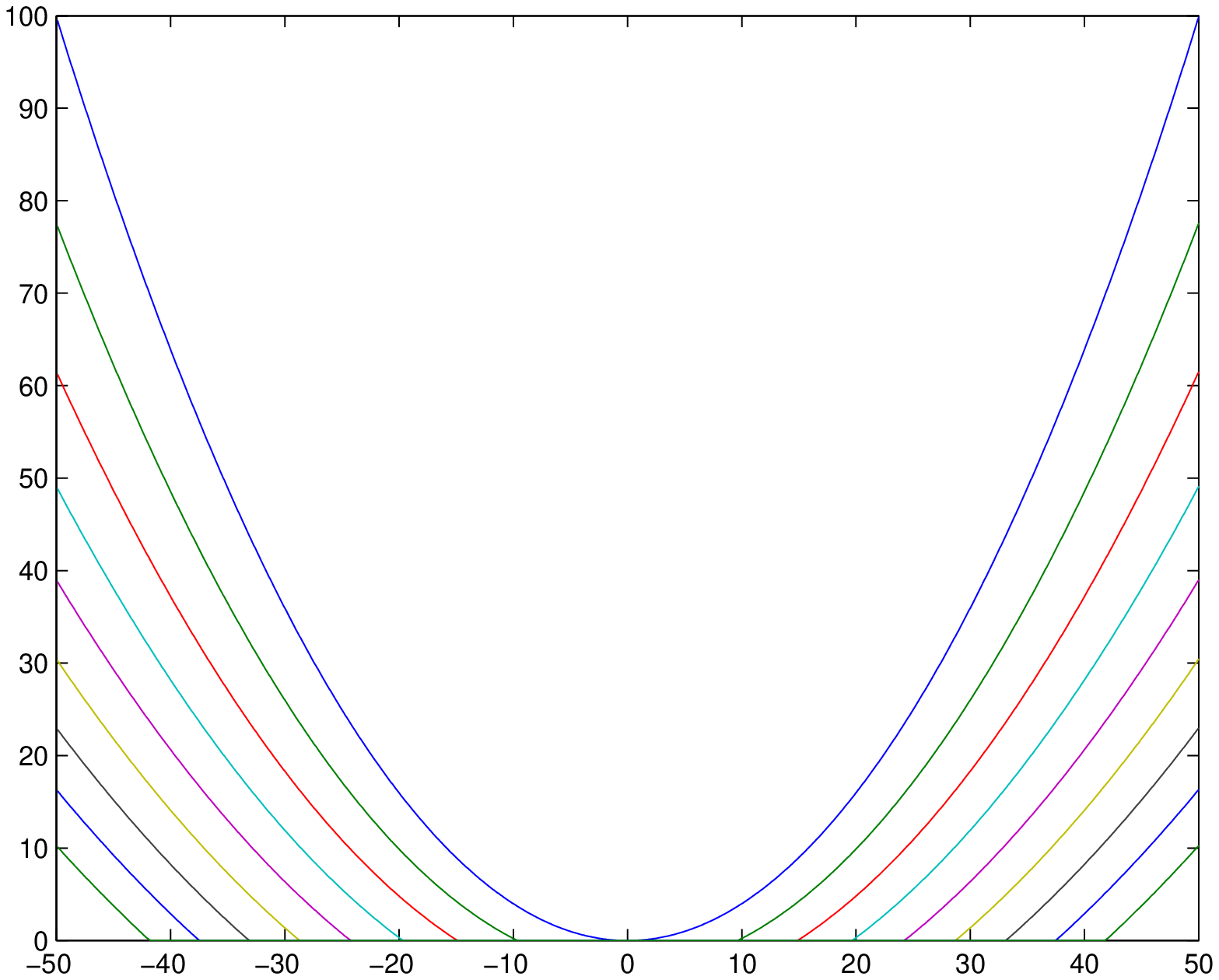}
\caption{Numerical simulations of the variational Hamilton-Jacobi equation with $r=1$, in the BGK case $\mathcal{H}(p) \equiv \frac{p - \tanh(p)}{\tanh(p)}$ (on the left) and in the "KPP case" $\mathcal{H}(p) = \vert p \vert^2$ (on the right). On both figures the linear propagation is noticed. In the "quadratic case" (KPP) the speed is larger than in the "at most linear" case.}
\end{center}
\end{figure}

\section{Remarks and perspectives in an unbounded velocity domain (\textit{e.g.} $V = \R^n$).}\label{Extensions}

In the previous Sections, the boundedness of the velocity space $V$ ( Hypothesis (H0) ) was a central hypothesis. Indeed, it gives immediately the compactness of operators to solve the spectral problem (H4), and facilitates the derivation of the uniform estimates of $\varphi^\eps$. Moreover, it automatically bounds the sequence $v^\eps$ in the viscosity procedure of Lemmas \ref{super} and \ref{sub}. This last property appears not to be true in general, see below.

In this last Section, we would like to comment on the case when $V$ is not bounded, and more precisely the case of the full space $V=\R^n$. We expect that, given that (H4) holds (which basically requires stronger assumptions on the operator $\mathcal{L}$ in the full space) the convergence result is still valid despite technicalities due to the unboundedness of the space. 

We first discuss the case of the transport-diffusion equation to illustrate the crucial character of (H4): The spectral problem (H4) does not have any non trivial solution in that case, and we show that the scaling $(t,x,v) \to \left( \frac{t}{\e} , \frac{x}{\e} , v \right)$ is not relevant. We then provide an example - the Vlasov equation - where the problem is compact in the velocity space. However, extending the convergence results in that case will need extra work and this issue will be discussed in a forthcoming work. Since we believe that this paper should be understood through examples, we end this Section with formal computations on a non-local convolution model.

\subsection{The Laplacian equation in an unbounded velocity domain.}

In this Subsection, we want to investigate the asymptotic properties of solutions of the following kinetic-diffusion equation
\begin{equation}\label{kindiff}
\forall (t,x,v) \in \R^+ \times \R^n \times \R^n, \qquad \partial_t f + v \cdot \nabla_x f = \sigma \Delta_v f.
\end{equation}
First of all, one can notice that the associated spectral problem of Hypothesis (H4), which writes
\begin{equation*}
\forall v \in \R^n, \qquad \sigma \Delta_v Q + ( v \cdot p ) \; Q = \mathcal{H}(p) Q
\end{equation*}
does not admit any nontrivial positive solutions. It relies on the lack of compactness of the Laplace operator on an unbounded domain. As a consequence, the method we have used before to average the velocity variable in the bounded velocity domain case cannot be applied here. We will now show that the scaling $(t,x,v) \to \left( \frac{t}{\e} , \frac{x}{\e} , v \right)$ is not well adapted and propose a more relevant scaling. In this case, as for the heat equation for example, one can guess this scaling by computing the fundamental solution of the kinetic diffusion operator. We recall this computation for the sake of completeness \cite{Kolmogorovalld}.

\begin{proposition}\label{propfond}

Let $f(t,x,v)$ be the solution of \eqref{kindiff} on $\R^+ \times \R^n \times \R^n$, associated to the initial data $\delta_x \delta_{v-w}$. Then 
\begin{equation*}
\forall (t,x,v) \in \R^{+*} \times \R^n \times \R^n, \qquad f_w(t,x,v) = \frac{\sqrt{3}}{ 2 \pi \sigma t^2 } \exp \left( - \frac{\left\vert v-w \right\vert^2 t^2 + 3 \vert 2x - (v+w)t \vert^2}{4 \sigma t^3} \right).
\end{equation*}
\end{proposition}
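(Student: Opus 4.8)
The plan is to pass to the Fourier transform in both the space and the velocity variables, where the degenerate (hypoelliptic) structure of \eqref{kindiff} collapses to an elementary transport equation. Writing $\widehat f(t,\xi,\eta) = \int_{\R^n}\int_{\R^n} f(t,x,v)\, e^{-i x\cdot\xi - i v\cdot\eta}\,dx\,dv$, multiplication by $v$ becomes $i\nabla_\eta$ and $\nabla_x$ becomes $i\xi$, so the transport term $v\cdot\nabla_x f$ transforms into $-\xi\cdot\nabla_\eta\widehat f$, while $\sigma\Delta_v f$ becomes $-\sigma|\eta|^2\widehat f$. Equation \eqref{kindiff} thus reads
\begin{equation*}
\partial_t \widehat f - \xi\cdot\nabla_\eta \widehat f = -\sigma |\eta|^2\, \widehat f,
\end{equation*}
a linear first order equation with a frequency-dependent decay, which I would solve by the method of characteristics.

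Fixing $\xi$, the characteristic curves are the straight lines $s\mapsto \eta+(t-s)\xi$, along which $\widehat f$ decays at the rate $\sigma|\eta+(t-s)\xi|^2$. Since the Dirac datum (a point mass at $x=0$, $v=w$) has Fourier transform $\widehat f(0,\xi,\eta)=e^{-iw\cdot\eta}$, integrating along characteristics from $0$ to $t$ gives
\begin{equation*}
\widehat f(t,\xi,\eta) = \exp\!\left(-i w\cdot(\eta + t\xi)\right)\exp\!\left(-\sigma\int_0^t |\eta + (t-s)\xi|^2\,ds\right).
\end{equation*}
The remaining integral is elementary: substituting $\tau=t-s$ yields $\int_0^t|\eta+\tau\xi|^2\,d\tau = t|\eta|^2 + t^2\,\eta\cdot\xi + \tfrac{t^3}{3}|\xi|^2$, so that $\widehat f$ is a centered Gaussian in $(\xi,\eta)$ times the linear phase carried by the initial velocity $w$.

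It then remains to invert the Fourier transform. Collecting the linear terms the phase is $i\xi\cdot(x-wt) + i\eta\cdot(v-w)$, and the quadratic part is the positive definite form attached to $M = \sigma\left(\begin{smallmatrix} \frac{2}{3}t^3 & t^2 \\ t^2 & 2t\end{smallmatrix}\right)$ (acting coordinatewise for general $n$, since $\eta\cdot\xi=\sum_j\eta_j\xi_j$ makes the $2n$-dimensional integral factor into $n$ copies of the two-dimensional one). Applying the standard Gaussian inversion formula produces the prefactor $\big((2\pi)^{n}\sqrt{\det M}\big)^{-1}$ and the exponent $-\tfrac12\, c^\top M^{-1} c$ with $c=(x-wt,\,v-w)$. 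A direct computation gives $\det M = \sigma^2 t^4/3$, hence the constant $\sqrt{3}/(2\pi\sigma t^2)$ in dimension one, and inverting $M$ together with the identity $x-wt=\tfrac12\big(2x-(v+w)t\big)+\tfrac12(v-w)t$ reorganizes the quadratic form exactly into $\big(|v-w|^2 t^2 + 3|2x-(v+w)t|^2\big)/(4\sigma t^3)$, which is the announced formula.

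The genuine difficulty here is conceptual rather than computational: the diffusion acts only in $v$, so there is no a priori smoothing in $x$ and the naive heat-kernel reasoning does not apply. The Fourier method circumvents this precisely by trading the $x$-transport for a drift in frequency space; the delicate bookkeeping is then the Gaussian inversion, where the correlation between $x$ and $v$ (built up by the integral $\int_0^t|\eta+\tau\xi|^2\,d\tau$) must be recognized as the specific non-diagonal form giving the combination $2x-(v+w)t$. As an alternative route one may exploit that $f$ stays Gaussian and close the hierarchy of moments, finding $\dot c_{vv}=2\sigma$, $\dot c_{xv}=c_{vv}$, $\dot c_{xx}=2c_{xv}$, whence $c_{vv}=2\sigma t$, $c_{xv}=\sigma t^2$, $c_{xx}=\tfrac23\sigma t^3$; this covariance matrix coincides with $M$ above and reproduces the same density.
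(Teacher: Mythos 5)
Your proposal is correct and follows essentially the same route as the paper: a Fourier transform in both $x$ and $v$, reduction to a transport equation with quadratic damping solved along characteristics, and Gaussian inversion. Your transform $\exp\left(-iw\cdot(\eta+t\xi)\right)\exp\left(-\sigma\left(t|\eta|^2+t^2\,\eta\cdot\xi+\tfrac{t^3}{3}|\xi|^2\right)\right)$ agrees with the one displayed in the paper's proof, and your explicit covariance computation correctly recovers the stated kernel.
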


\begin{proof}[{\bf Proof of Proposition \ref{propfond}}]
This computation can be done using the Fourier transform $\mathcal{F}$ in space and velocity, since the operator is linear. One obtains
\begin{equation*}
\forall (t,k,p) \in \R^+ \times \R^n \times \R^n, \qquad \mathcal{F}(f)(t,k,p) = \exp\left(- i (p+kt) w\right) \exp \left( - \sigma t \left(  \left\vert p + \frac{kt}{2} \right\vert^2 + \vert k \vert^2 \frac{t^2}{12}\right) \right),
\end{equation*}
and the inverse Fourier transform can be easily computed using the invariances of Gaussians with respect to the Fourier transformations.

\end{proof}

We now perform an alternative scaling on this equation, namely $(t,x,v) \to \left( \frac{t}{\e} , \frac{x}{\eps^{2}} , \frac{v}{\eps} \right)$. Then the fundamental solution $f_0$ becomes  
\begin{equation*}
\forall (t,x,v) \in \R^{+*} \times \R^n \times \R^n, \qquad f_0^\e(t,x,v) = \frac{\sqrt{3}\e^2}{ 2 \pi \sigma t^2 } \exp \left( - \frac{1}{\e} \frac{\left\vert v \right\vert^2 t^2 + 3 \vert 2x - vt \vert^2}{4 \sigma t^3} \right).
\end{equation*}
In this framework, and only with this scaling, we recover the sharp front ansatz that we studied in the previous part of the article with a bounded domain, and the associated phase (which now depends on $v$) $\varphi^0(t,x,v) = \frac{\left\vert v \right\vert^2 t^2 + 3 \vert 2x - vt \vert^2}{4 \sigma t^3}$. It is also possible to obtain this result by performing the Hopf-Cole transformation in \eqref{kindiff} and then solving the limiting Hamilton-Jacobi equation on the phase $\varphi^0$ which reads:
\begin{equation*}
\forall (t,x,v) \in \R^+\times \R^n \times \R^n, \qquad \partial_t \varphi^0 + v \cdot \nabla_x \varphi^0 + \sigma \vert \nabla_v \varphi^0 \vert^2 = 0.
\end{equation*}
We obtained an example where the spectral problem has no solution (see also \cite{Bouin-Calvez} for another fundamental example), and this makes the information propagate as $x \sim t^2$: There is a front acceleration, as noticed for others models, see \cite{Bouin,Bouin-Calvez-Nadin,Bouin-Calvez-Grenier-Nadin}.

\subsection{The Vlasov-Fokker-Planck equation}

We would like now to comment on the Vlasov-Fokker-Planck equation, where the velocity operator provides enough compactness to solve the spectral problem (H4). 
Our equation reads 
\begin{equation}\label{Vlasov}
\forall (t,x,v) \in \R^+ \times \R^n \times \R^n, \qquad \partial_t f + v \cdot \nabla_x f = \nabla_v \cdot \left( \nabla_v f + \frac{v }{ \sigma^2} f \right). 
\end{equation}
The normalized stationary density is given by the Gaussian equilibrium $M(v) = \frac{1}{\sigma \sqrt{2 \pi}} \exp\left( - \frac{v^2}{2\sigma^2} \right)$. After performing the hyperbolic scaling and the kinetic WKB ansatz \eqref{WKBansatz}, it yields
\begin{equation*}
\forall (t,x,v) \in \R^+ \times \R^n \times \R^n, \qquad \partial_t \varphi^\eps + v \cdot \nabla_x \varphi^\eps = \frac{1}{\eps} \Delta \varphi^\eps - \frac{1}{\eps^2} \vert \nabla_v \varphi^\eps \vert^2 - \frac{1}{\eps} \nabla_v \psi \cdot \nabla_v \varphi^\eps, 
\end{equation*}
By parabolic regularity, one obtains, given an initial condition $\varphi_0(x,v) \in \mathcal{C}_b^2\left( \R^n \times \R^n \right)$, one unique bounded solution $\varphi^\eps$ in $\mathcal{C}^{2,\alpha} \left( \R^+ \times \R^n \times \R^n \right)$ for all $\eps > 0$. The spectral problem associated to \eqref{Vlasov} is :
\begin{equation}\label{SpecPb}
\nabla_v \cdot \left( \nabla_v Q_p + \frac{v }{ \sigma^2} Q_p \right) + \left( v \cdot p \right) Q_p = \mathcal{H}(p) Q_p,
\end{equation}
As a particular feature a the Gaussian case, one can solve \eqref{SpecPb} explicitely using the Fourier transformation. It yields the following eigenelements
\begin{equation*}
\mathcal{H}(p) = \sigma^4 \vert p \vert^2, \qquad Q_p(v) = \frac{1}{\sigma \sqrt{2 \pi}} \exp\left( - \frac{(v- \sigma^4 p)^2}{2\sigma^2} \right).
\end{equation*}
Hence, our hypothesis (H4) is fulfilled.

We shall comment here on the complications due to the unboundedness of the space. We cannot perform the same proof as for the proof of Theorem \ref{HJlimit}. Indeed, the sequence of approximated extremas in velocity, namely $v^\eps$ defined after \eqref{eqeta}, may not exist in general in a unbounded velocities setting. In particular, in this case, the correction $\eta$ is given by 
\begin{equation}
\eta(t,x,v) = - \ln \left( \frac{ Q_{\left[\nabla_x \psi^{0}(t,x)\right]}(v) }{M(v)} \right) = \sigma^2 v \cdot \nabla_x \psi^{0}(t,x) - \sigma^6 \vert \nabla_x \psi^{0}(t,x) \vert^2,
\end{equation}
which is linear in $v$, so that the function $\varphi^\eps - \eps \eta$ has no possible extrema in the velocity variable. This indicates that the correction term of order $\eps$ converges locally uniformly but not globally towards the corrector $\eta$. We postpone the analysis of this case to a forthcoming work.

\subsection{Formal computations on a confined non-local equation.}

We finish this paper with \textit{formal} computations on a case where the diffusive part of the operator is replaced by a nonlocal convolution operator. This is motivated by biological problems where mutations can have large range. We keep the drift part to ensure compactness. We are given a probability kernel $\mathrm{K}$ on $\R$, and we set 
\begin{equation*}
L(f) := \left( \mathrm{K} \star f - f \right) + \nabla \cdot \left( v f \right).
\end{equation*}
Solving the eigenvalue problem using the Fourier transform in the full space, we obtain that necessarily 
 \begin{equation*}
\mathcal{H}(p) = \mathrm{\hat K}(ip) - 1, \qquad F(Q_p)(\xi) =  \exp \left(  \int_{0}^{\xi} \frac{\mathrm{ \hat K}(\xi') -   \mathrm{ \hat  K}(ip)}{ \xi' - ip } d \xi' \right).
\end{equation*}
As $  \mathrm{ \hat  K}(ip) = \int_{V} \mathrm{ K}(x) e^{px} dx$, we observe that this would define an Hamiltonian on the zone where $K$ admits exponential moments. We have 
\begin{equation*}
Q_p(v) = \int_V  \exp \left(  \int_{0}^{\xi} \frac{ \mathrm{ \hat K}(\xi') -   \mathrm{ \hat K}(ip)}{ \xi' - ip } d \xi' \right) \exp \left(  i v \xi \right) d \xi,
\end{equation*}
where the last integral over the velocities has to be understood in the Fourier-Plancherel $L^2$ sense. One can easily prove that such a $Q_p$ is well normalized and real. The point which makes this Subsection be only formal is that we were not able to prove that such a $Q_p$ is indeed a positive eigenvector. Let us provide a few examples that strengthen this conjecture.

\begin{example} We now specify some convolution kernels.
\begin{enumerate}
\item $K(x) = \frac{1}{\sqrt{2 \pi}} e^{- \frac{x^2}{2} }$. Then $\mathcal{H}$ is well-defined on $\R$ and $H(p) = e^{ \frac{p^2}{2} } - 1.$
\item $K(x) = \frac12 e^{- \vert x \vert }$. Then $\mathcal{H}$ is well-defined on $]-1,1[$ and $\mathcal{H}(p) = \frac{p^2}{1 - p^2}.$
In this case, we can compute a bit further $F(Q_p)$ : 
\begin{equation*}
F(Q_p)(\xi) = \frac{1}{\left( 1 + \vert \xi \vert^2 \right)^{ \frac{1}{2(1-p^2)} } } \cdot \exp \left( i \frac{p}{p^2 - 1} \arctan(\xi) \right).
\end{equation*}
In particular, when $p=0$, one has 
\begin{equation*}
F(Q_p)(\xi) = \frac{1}{\left( 1 + \vert \xi \vert^2 \right)^{ \frac{1}{2} } },
\end{equation*}
which inverse Fourier transform can be computed with Airy functions and is positive. A numerical plot confirms formally the positivity of $Q_p$ (result not shown).

\end{enumerate}
\end{example}

\section*{Acknowledgement}

The author wishes to thank deeply Vincent Calvez to have suggested this problem to him and for stimulating and very interesting discussions about it.

\nocite{*}

\end{document}